\documentclass[12pt]{article}

\usepackage{CJK}
\usepackage{latexsym}
\usepackage{amssymb, graphicx, amsmath}
\usepackage{booktabs}
\usepackage{cases}
\usepackage{color}
\definecolor{myblue}{rgb}{0,0,0.5}
\definecolor{mygreen}{rgb}{0,0.5,0}
\definecolor{myred}{rgb}{0.5,0,0}

\usepackage{enumerate}

\usepackage{amsmath,amsthm,amssymb}
\usepackage{graphicx}
\usepackage{subfigure}
\usepackage{caption}
\usepackage{epsfig}
\usepackage{epstopdf}
\usepackage{multirow}
\usepackage{url}
\usepackage{hyperref}

\usepackage{listings}
\usepackage{float}

\usepackage[linesnumbered,ruled,vlined,algo2e]{algorithm2e}

\usepackage{threeparttable}
\usepackage{caption}
\newcommand{\RNum}[1]{\uppercase\expandafter{\romannumeral #1\relax}}

\def \[{\begin{equation}}
\def \]{\end{equation}}

\newtheorem{theorem}{Theorem}[section]

\newtheorem{lemma}{Lemma}[section]

\newtheorem{remark}{Remark}

\begin{CJK}{GBK}{song}
\gdef\cdefinition{定义\,}
\gdef\clemma{引理\,}
\gdef\ctheorem{定理\,}
\end{CJK}

\begin{document}
\begin{CJK*}{GBK}{song}

\begin{center}

{\Large \bf Adaptive primal dual hybrid gradient algorithms based on average spectrum for saddle point problems}\\

\bigskip

 {\bf Shengjie Xu}\footnote{\parbox[t]{16cm}{
 Department of Mathematics, Harbin Institute of Technology, Harbin, China.  This author was supported by the NSFC grant 12501444 and the NSFC grant 72501298.   Email: xsjnsu@163.com
  }}
  \quad 
{\bf Bingsheng He}\footnote{\parbox[t]{16.0cm}{
Department of Mathematics, Nanjing University, China.
  This author was supported by the NSFC Grant
11871029. Email: hebma@nju.edu.cn}}

\medskip

\today

\end{center}

\medskip

{\small

\parbox{0.95\hsize}{

\hrule

\medskip

{\bf Abstract.} The primal dual hybrid gradient algorithm (PDHG), which is also known as the Arrow-Hurwicz method, is a fundamental algorithm for saddle point problems especially in imaging. It also inspires a great number of influential algorithms such as the stochastic PDHG and the Chambolle-Pock's primal dual algorithm. In the literature,  convergence theory of the PDHG is established only when some more restrictive conditions are additionally assumed, and  it is proved that the PDHG with any constant step sizes could diverge for generic setting of convex saddle point problems.  The Chambolle-Pock's primal dual algorithm, as an influential variant of the PDHG, is thus widely used due to its provable convergence theory and competitive numerical performance. However,  step sizes of the Chambolle-Pock's primal dual algorithm are inherently bounded by its associated matrix spectrum, and this restriction could limit its computational capacity structurally. To address these limitations both in theory and practice,  we propose a class of adaptive primal dual hybrid gradient algorithms for generic convex saddle point problems in this paper. By exploiting the prediction-correction algorithmic framework, the global convergence theory of the proposed schemes can be determined only by the average spectrum of the underlying matrix, and it thus leads to a potential acceleration. The numerical experiment on the assignment problem illustrates the superior numerical performance of the proposed method.

\medskip

\noindent {\bf Keywords}:  saddle point problem, primal dual hybrid gradient algorithm,   adaptive parameter tuning, prediction-correction scheme, assignment problem

 \medskip

  \hrule

  }}

\bigskip

\section{Introduction}

This study focuses on the following canonical convex saddle point problem
\begin{equation}\label{Min-Max}
  \min_{x\in \cal{X}} \max_{y\in \cal{Y}} \Phi(x,y) := \theta_1(x) -  y^TAx - \theta_2(y),
\end{equation}
where $\theta_1:\Re^n\to \Re$ and $\theta_2:\Re^m\to \Re$ are proper convex but not necessarily smooth functions, ${\cal X} \subseteq \Re^n$ and ${\cal Y} \subseteq \Re^m$ are closed convex sets,  and $A\in\Re^{m\times n}$ is a given matrix. Throughout our discussion, we denote by $\rho(\cdot)$ the spectrum of a matrix and assume that the solution set of \eqref{Min-Max} is  nonempty. In practice, the saddle point problem \eqref{Min-Max} has captured a large multitude of application problems in various fields. For instance, we can refer the readers to e.g.,  \cite{AHU,Deteix2022,Miroslav2018,QV1997} for some scientific computing models and \cite{CC2010,CHPock,CP-Acta,ZhuChan} for a great number of image restoration problems. In particular, the optimality condition of the canonical convex minimization problem with linear constraints can be reformulated as a special case of the studied model \eqref{Min-Max}.

 To solve the model \eqref{Min-Max}, the Arrow-Hurwicz method originally proposed in \cite{AHU} is fundamental and influential, and it is also known as the primal dual hybrid gradient algorithm (PDHG) emphasized in \cite{ZhuChan} for efficiently solving some variational image reconstruction problems.  As discussed in e.g., \cite{CHPock,HeYuanSIAMIS,ZhuChan}, with the given $(x^k,y^k)$, the iterative scheme of the PDHG can be concretely specified as 
 \begin{subequations} \label{PDA}
\begin{numcases}{\hbox{(PDHG)}}
\label{PDA-x}  x^{k+1} \in \arg\min \Big\{ \Phi(x,y^k) + \frac{r}{2}\|x-x^k\|^2 \;|\; x\in {\cal X} \Big\},\\[0.2cm]
\label{PDA-y}  y^{k+1} \in  \arg\max \Big\{ \Phi(x^{k+1}, y) - \frac{s}{2}\|y-y^k\|^2  \;|\; y\in {\cal Y} \Big\},
\end{numcases}
\end{subequations}
where $r>0$ and $s>0$ are the regularization parameters corresponding to the primal and dual step sizes respectively.  It is clear that the coupled term $y^TAx$ is decoupled by optimizing the primal and dual variables alternatingly, and it thus reduces the computational complexity of the original problem significantly. In the following sections, we assume the subproblems \eqref{PDA-x} and  \eqref{PDA-y} have closed-form solutions or can be solved easily with a high precision.  Due to the ground breaking work \cite{ZhuChan}, the benchmark PDHG has immediately inspired a great number of primal dual type algorithms such as the stochastic PDHG \cite{Alacaoglu 2022,chambolle2018stochastic,Chambolle2024}, the Chambolle-Pock's primal dual algorithm \cite{CHPock,CP-MP,MP2016}  and the generalized primal dual algorithms \cite{EZC,HeMaYuan2017,HeMXY,HeYuanSIAMIS,Zhang}. We also refer the readers to, e.g., \cite{CC2014,Condat,GHY2014,KP2015,Ma2019,PockC} for more variously influential and important variants.

To guarantee the convergence of the PDHG \eqref{PDA}, there are some more restrictive conditions on the functions, domains and step sizes are additionally assumed. For example, when $\theta_1$  is locally Lipschitz continuous and the domain $\mathcal{Y}$ is bounded, it was proved in \cite{BR2012} that the PDHG is convergent provided that the step sizes are vanishing and the sequences of step sizes satisfy some summable conditions. When one of the subfunctions in \eqref{Min-Max} is strongly convex,  the global convergence is established in \cite{HeYouYuan} provided that the step sizes are bounded by $\rho(A^TA)$ as well as the strong convexity modulus of the strongly convex function.  However, for the generic setting of the studied model \eqref{Min-Max}, it was shown in \cite{HXY-AH}  that the PDHG \eqref{PDA} could diverge for any fixed constant step sizes. Consequently, the Chambolle-Pock's primal dual algorithm proposed in \cite{CHPock}, as one of the most influential variants of \eqref{PDA}, is widely used due to its provable convergence theory \cite{CHPock,HeYuanSIAMIS} as well as its efficient numerical performance \cite{CC2010,CP-Acta}. More concretely,  the Chambolle-Pock's primal dual algorithm can be stated as 
\begin{subequations} \label{C-P}
\begin{numcases}{}
\label{C-P-x}  x^{k+1} =\arg\min \big\{\Phi(x,y^k) + \frac{r}{2}\|x-x^k\|^2 \,\;|\;\, x\in {\cal X} \big\},\\[0.1cm]
\label{C-P-x-bar} \bar{x}^{k+1}= 2x^{k+1} -x^k,\\[0.1cm]
\label{C-P-y}
     y^{k+1} = \arg\max \big\{\Phi(\bar{x}^{k+1}, y) - \frac{s}{2}\|y-y^k\|^2  \,\;|\;\, y\in {\cal Y} \big\},
\end{numcases}
\end{subequations}
in which the regularization parameters  $r>0$ and $s>0$ are required to satisfy the condition 
\begin{equation}\label{CP-CC}
 rs>\rho(A^TA)
\end{equation}
to ensure the convergence theory of \eqref{C-P}.  It is clear that relaxing such a conventional condition could result in a potential acceleration, and the recent work \cite{HeMXY,Jiang2023,Yan2024} shows that the condition \eqref{CP-CC} can be further optimally improved to $rs>0.75\rho(A^TA)$.  Since the convergence of the PDHG \eqref{PDA} with fixed step sizes is established only when some more conservative conditions are additionally assumed and the PDHG with any constant step sizes could diverge for the generic setting of the model \eqref{Min-Max}, and that the numerical capacity of Chambolle-Pock's primal dual algorithm \eqref{C-P} is structurally dominated by $\rho(A^TA)$, it is significant to study the primal dual type algorithms with varying step sizes such that the conventional condition \eqref{CP-CC} can be relaxed inherently. 

In fact, as discussed in \cite{HeMXY},  the strong condition \eqref{CP-CC}  essentially ensures the regularization parameters $r$ and $s$ satisfy the bound inequality 
\begin{equation}\label{C-Condition}
 r s\|x^k-x^{k+1}\|^2>\|A(x^k-x^{k+1})\|^2
\end{equation}
for all iterations uniformly, and it thus provides an iteration-independent and hence conservative bounds for the choice of $r s$. If the average spectrum of $A^T\!A$  is significantly smaller than the  spectrum $\rho(A^T\!A)$, an essential question is whether the convergence theory of \eqref{PDA} still maintains provided that the condition \eqref{C-Condition} holds dynamically. That is 
\begin{equation}
  r_k s_k\|x^k-x^{k+1}\|^2>\|A(x^k-x^{k+1})\|^2.
\end{equation}
The preliminary purpose of this paper is to give an affirmative answer in some modified sense and propose a class of adaptive primal dual hybrid gradient algorithms based on the average spectrum for efficiently solving the studied model \eqref{Min-Max}.

The rest of the paper is  organized as follows. Some preparatory notations and results are summarized in Section \ref{sec2},  and an adaptive dual primal hybrid gradient algorithm based on average spectrum of  $A^T\!A$ is presented in Section \ref{sec3}. We further present an adaptive primal dual hybrid gradient scheme based on  average spectrum of $AA^T$ in Section \ref{sec4}. The superior numerical performance of the proposed scheme is demonstrated in Section \ref{sec5}. Finally, some conclusive remarks are given in Section \ref{sec6}.

\section{Preliminaries}\label{sec2}
\setcounter{equation}{0}
\setcounter{remark}{0}

In this section, we summarize some preliminary notations and results for further analysis. A fundamental conclusion which characterizes the variational inequality (VI) structure for a composite convex minimization problem is given first by the following lemma.
\begin{lemma} \label{CP-TF}
\begin{subequations} \label{CP-TF0}
Let $f(z)$ and $g(z)$ be convex functions, and $\mathcal{Z} \subseteq \Re^n$ be a closed convex set. If $g$ is differentiable on an open set which contains $\mathcal{Z}$, and the solution set of the composite minimization problem
   $$
   \min\{f(z) + g(z) \; |\; z\in \mathcal{Z}\}
   $$ is nonempty, then we have
   \begin{equation}\label{CP-TF1}
     z^*  \in \arg\min \{  f(z) + g(z)  \; | \;  z\in \mathcal{Z}\}
   \end{equation}
if and only if
\begin{equation}\label{CP-TF2}
 z^*\in \mathcal{Z}, \quad   f(z) - f(z^*) + (z-z^*)^T\nabla g(z^*) \ge 0, \quad \forall\, z\in \mathcal{Z}.
\end{equation}
\end{subequations}
\end{lemma}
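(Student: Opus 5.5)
The plan is to prove the two implications separately, using only convexity of $f$ and $g$, the differentiability of $g$ near $\mathcal{Z}$, and convexity of $\mathcal{Z}$. Nonemptiness of the solution set is not needed for the equivalence itself.

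\emph{Sufficiency} ((\ref{CP-TF2}) $\Rightarrow$ (\ref{CP-TF1})). Because $g$ is convex and differentiable on an open set containing $\mathcal{Z}$, the gradient inequality gives
\[
g(z)\ge g(z^*)+(z-z^*)^T\nabla g(z^*)\quad\text{for all } z\in\mathcal{Z}.
\]
Adding this to the inequality in (\ref{CP-TF2}) yields $f(z)+g(z)\ge f(z^*)+g(z^*)$ for every $z\in\mathcal{Z}$. Since $z^*\in\mathcal{Z}$, this says exactly that $z^*$ is a minimizer.

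\emph{Necessity} ((\ref{CP-TF1}) $\Rightarrow$ (\ref{CP-TF2})). Fix $z\in\mathcal{Z}$ and $t\in(0,1]$, and set $z_t=z^*+t(z-z^*)$; this point lies in $\mathcal{Z}$ by convexity. Optimality of $z^*$ gives $f(z_t)+g(z_t)\ge f(z^*)+g(z^*)$. Convexity of $f$ gives $f(z_t)\le (1-t)f(z^*)+tf(z)$, so that
\[
t\big(f(z)-f(z^*)\big)+g(z_t)-g(z^*)\ge 0.
\]
Dividing by $t>0$ and letting $t\downarrow 0$, differentiability of $g$ at $z^*$ turns the difference quotient $(g(z_t)-g(z^*))/t$ into $(z-z^*)^T\nabla g(z^*)$. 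This gives the inequality in (\ref{CP-TF2}).

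The only delicate point is the limit in the necessity direction. It requires $f(z^*)$ to be finite, which holds because $z^*$ is a minimizer of a proper function. It also requires $g$ to be differentiable at $z^*$, and this is where the hypothesis of an open set containing $\mathcal{Z}$ enters: $z^*$ may lie on the boundary of $\mathcal{Z}$, so the directional derivative must be well defined there. If $f$ takes the value $+\infty$ at some $z$, the inequality holds trivially. Otherwise the argument above applies verbatim.
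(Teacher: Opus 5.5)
Your proof is correct. The paper gives no argument of its own and only cites Theorem 3.1.23 of Nesterov's monograph, and your two-step argument is essentially the standard proof of that result (the gradient inequality for sufficiency, the difference quotient along $z^*+t(z-z^*)$ for necessity), so it makes the lemma self-contained. One small correction to your aside: properness of $f$ alone does not make $f(z^*)$ finite (if $\mathcal{Z}\cap\mathrm{dom}\,f=\emptyset$, every point of $\mathcal{Z}$ is a ``minimizer''), but this does not matter here because $f$ is real-valued in the paper's setting.
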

\begin{proof}
See the proof of Theorem 3.1.23 in the monograph \cite{Nesterov2018}.
\end{proof}
With the assertion of Lemma \ref{CP-TF}, we now derive the associated VI representation of the optimality condition for the studied model \eqref{Min-Max}. More concretely, a point pair $(x^*,y^*)\in \mathcal{X}\times \mathcal{Y}$ is called a saddle point  of  \eqref{Min-Max} if it satisfies the inequalities
$$\Phi(x,y^*) \geq \Phi(x^*,y^*) \geq \Phi(x^*,y), \quad \forall\, (x,y)\in\mathcal{X}\times \mathcal{Y},$$
which can be further rewritten as 
$$
    \left\{ \begin{array}{l}
     x^*  \in \arg\min \big\{\Phi(x,y^*)  \,\;|\;\,  x\in \mathcal{X} \big\},   \\[0.2cm]
     y^*\in \arg\max \big\{\Phi(x^*,y) \,\;|\;\, y\in \mathcal{Y} \big\}.
        \end{array} \right.
       $$
Note that the subfunctions $\theta_1$ and $\theta_2$ given in \eqref{Min-Max} are not necessarily smooth. It follows from Lemma \ref{CP-TF} that the saddle point $(x^*, y^*)$ also satisfies 
\begin{equation}\label{VI-Chara}
 \left\{ \begin{array}{lll}
     x^*\in \mathcal{X},  &   \theta_1(x) -  \theta_1(x^*) + (x-x^*)^T(- A^Ty^*) \ge 0, & \forall\; x\in \mathcal{X}, \\[0.1cm]
     y^*\in \mathcal{Y},  &   \theta_2(y) - \theta_2(y^*)  + (y-y^*)^T(Ax^*)\ge 0,  &  \forall \;  y\in \mathcal{Y}.
        \end{array} \right.
\end{equation}
Furthermore, by denoting 
\begin{subequations}\label{VI}
\begin{equation}\label{VI-notations}
 w = \left(\!\begin{array}{c}
                     x\\
                   y \end{array}\! \right),
  \quad \theta(w) = \theta_1(x) + \theta_2(y), \quad
    F(w) =\left(\!\begin{array}{c}
     - A^Ty \\
     Ax \end{array} \!\right)  \quad\hbox{and}\quad \Omega = \mathcal{X} \times \mathcal{Y}, 
\end{equation}
the inequalities above can be compactly rewritten as the following VI:
\begin{equation}\label{VI-problem}
 \hbox{VI}(\Omega, F,\theta): \quad  w^*\in \Omega, \quad \theta(w) -\theta(w^*) + (w-w^*)^T F(w^*) \ge 0, \quad \forall \, w\in\Omega.
\end{equation}
\end{subequations}
Since the operator $F$ in \eqref{VI-notations} is affine with a skew symmetric structure, we have
\begin{equation}\label{EQF}
(w_1- w_2)^T(F(w_1) -F(w_2))\equiv 0, \quad \forall\; w_1, w_2 \in \Re^{(n+m)},
\end{equation}
which indicates that $F$ is also monotone. In the following we denote by $\Omega^*$ the solution set of the VI \eqref{VI}, which is also the saddle point set of the studied model \eqref{Min-Max}.

\section{Adaptive dual primal hybrid gradient algorithm}\label{sec3}
\setcounter{equation}{0}
\setcounter{remark}{0}

In this section, we assume that the primal subproblem of the PDHG can be implemented easily, and we adjust the primal regularization parameter $r_k$ dynamically.  
With this regard, we present an adaptive dual primal  hybrid gradient algorithm based on average spectrum of $A^T\!A$, in which the primal regularization parameter $r_k$ is varying while the dual regularization parameter $s$ is fixed. 

As discussed in, e.g.,  \cite{HeYuanSIAMIS,Ma2019}, the prediction-correction algorithmic framework provides a simple yet powerful analysis tool for simplifying the convergence theory of a certain convex minimization algorithm. In this paper, we follow this methodology and present the novel method also in a prediction-correction manner.

\subsection{Some fundamental matrices and their relationship}

To give the specific scheme of the proposed method, we first define some fundamental matrices which could significantly simplify its convergence theory. Let the prediction matrix  $Q_{k}^{DP}$  and the average norm matrix $H^{DP}$ be defined as
\begin{equation}\label{QDP}
  Q_{k}^{DP}=\left(
               \begin{array}{cc}
                 r_kI_n & 0 \\
                 -A & sI_m \\
               \end{array}
             \right) \quad \hbox{and} \quad   H^{DP}=\left(
               \begin{array}{cc}
                 r_aI_n & 0 \\
                 0 & s I_m \\
               \end{array}
             \right),
\end{equation}
respectively, where 
\begin{equation}
 r_a=\frac{1}{s} \kappa \rho_{\hbox{\footnotesize average}}(A^T\!A)
\end{equation}
with $\kappa>0$ a balanced factor.  Furthermore, we define the correction matrix $M_{k}^{DP}$ as
\begin{equation}\label{MDP}
  M_k^{DP}=\left(
               \begin{array}{cc}
                 \dfrac{r_k}{r_a}I_n & 0 \\[0.4cm]
                 -\frac{1}{s}A & I_m \\
               \end{array}
             \right).
\end{equation}
It is trivial to verify that the tailored matrices defined above satisfy the identity
\begin{equation}\label{DHMQ}
  Q_k^{DP}=H^{DP}M_k^{DP}.
\end{equation}

\subsection{Algorithm}
With the matrices defined above, we now turn to present the adaptive dual primal hybrid gradient algorithm in this subsection. 
The proposed scheme takes the prototype dual primal hybrid gradient algorithm (DPHG)  as a predictor, and then updates the predictor by a simple correction. More concretely, the novel adaptive  method adopts the following prediction-correction scheme.

\begin{center}\fbox{
 \begin{minipage}{16.0cm}
\smallskip
\noindent{\bf{Prediction-correction representation of the adaptive DPHG. }}
\medskip

\textbf{(Prediction Step)} With the given $w^k=(x^k;y^k)$, the predictor $\tilde{w}^k=(\tilde{x}^k;\tilde{y}^k)$ is generated by
 \begin{subequations} \label{ADPA-P}
\begin{numcases}{}
\label{ADPA-y}  \tilde{y}^k \in \arg\max \big\{ \Phi(x^k, y) - \frac{s}{2}\|y-y^k\|^2  \;|\; y\in {\cal Y} \big\}, \\[0.1cm]
\label{ADPA-x}  \tilde{x}^k \in \arg\min \big\{ \Phi(x,\tilde{y}^k) + \frac{r_k}{2}\|x-x^k\|^2 \;|\; x\in {\cal X} \big\}.
\end{numcases}
\end{subequations}

\textbf{(Correction Step) }With the predefined matrices given in \eqref{QDP} and \eqref{MDP}, the new iterate $w^{k+1}=(x^{k+1};y^{k+1})$ is updated by 
\begin{subequations} \label{ADPA-c}
\begin{equation}\label{ADPA-C} 
\left(\!
  \begin{array}{c}
    x^{k+1} \\[0.2cm]
    y^{k+1}
  \end{array}\!
\right)=
\left(\!
  \begin{array}{c}
    x^k \\[0.2cm]
    y^k
  \end{array}\!
\right)-\gamma\alpha_k^\ast
\left(
  \begin{array}{cc}
    \dfrac{r_k}{r_a}I_{n}  & 0 \\[0.1cm]
    -\frac{1}{s}A   & I_m \\
  \end{array}
\right)
\left(\!
  \begin{array}{c}
    x^k-\tilde{x}^k \\[0.2cm]
    y^k-\tilde{y}^k
  \end{array}\!
\right). \qquad \quad
\end{equation}
where
\begin{equation}\label{dp-step}
  \gamma\in(0,2) \quad \hbox{and} \quad  \alpha_k^\ast = \frac{(w^k-\tilde{w}^k)^TQ_k^{DP}(w^k-\tilde{w}^k)}{\|M_k^{DP}(w^k-\tilde{w}^k)\|_{H^{DP}}^2}.
\end{equation}
\end{subequations}
\end{minipage}}
\end{center}

As can be seen easily,  the prediction step \eqref{ADPA-P} essentially shares the same features and computational complexity as the PDHG  scheme \eqref{PDA}, despite their only difference in the order of updating the primal and dual variables. With this regard, we name the proposed scheme \eqref{ADPA-P}-\eqref{ADPA-c} the adaptive DPHG. 

In the following we tune the primal regularization parameter $r_k$ dynamically while keep the dual  regularization parameter $s$ fixed. 
To implement the adaptive DPHG concretely, we first choose a suitable $s$ such that 
$$\|A^Ty-A^Ty^k\|^2\approx s\|y-y^k\|^2.$$
Note that
$$ \|A^T(y^k-\tilde{y}^k)\|^2\approx \rho_{\hbox{\footnotesize average}}(AA^T)\|y^k-\tilde{y}^k\|^2$$
in the sense of probability expectation. We can take 
$$s=\tau\rho_{\hbox{\footnotesize average}}(AA^T)=\frac{1}{m}\tau\hbox{Trace}(AA^T),$$ 
where the balanced factor $\tau$ is empirically suggested to take $\tau\in[1/5,5]$. 
For the fixed $s$, we then adjust the primal regularization parameter $r_k$ dynamically to satisfy the following essential inequality  
\begin{equation}\label{ccdp}
\frac{1}{s}\|A(x^k-\tilde{x}^k)\|^2\leq\nu r_k\|x^k-\tilde{x}^k\|^2 \quad \hbox{with} \quad \nu\in(0,1).
\end{equation}
It is clear that the condition \eqref{ccdp} naturally holds when $\nu r_ks>\rho(A^T\!A)$, which also corresponds to the uniformly conservative condition \eqref{CP-CC} and the desired dynamic condition \eqref{C-Condition} when $\nu\rightarrow1$.  
In practice, a bigger initial regularization parameter $r_0$ could reduce the tuning times for the condition \eqref{ccdp} while lead to a smaller step size, and it is thus significant to choose an appropriate initial $r_0$ to balance the tuning times and step size. In light of  
\begin{equation}\label{aver-con}
  \|A(x^k-\tilde{x}^k)\|^2\approx \rho_{\hbox{\footnotesize average}}(A^T\!A)\|x^k-\tilde{x}^k\|^2,
\end{equation}
we can take  
\begin{equation}\label{r0}
  r_0=\frac{3}{2s}\rho_{\hbox{\footnotesize average}}(A^T\!A)=\frac{3}{2ns}\hbox{Trace}(A^T\!A).
\end{equation}
In fact, combining with the approximate  \eqref{aver-con} and the equality \eqref{r0}, we have 
$$\|A(x^k-\tilde{x}^k)\|^2\approx \rho_{\hbox{\footnotesize average}}(A^T\!A)\|x^k-\tilde{x}^k\|^2\overset{\eqref{r0}}{=}\frac{2}{3}r_0 s\|x^k-\tilde{x}^k\|^2,$$
it further implies that 
$$\frac{1}{s}\|A(x^k-\tilde{x}^k)\|^2\approx \frac{2}{3} r_0\|x^k-\tilde{x}^k\|^2.$$
It thus ensures the relaxed condition \eqref{ccdp} formally.

With the prediction-correction scheme \eqref{ADPA-P}-\eqref{ADPA-c}, the essential step of adaptive DPHG is to design a simple operational manner to satisfy the relaxed condition \eqref{ccdp} dynamically. In the following we give a concrete adaptive tuning criteria based on the average spectrum of $A^T\!A$, and the adaptive DPHG can be concretely summarized as Algorithm \ref{alg2}.
\bigskip

\begin{algorithm2e}[H]
		\SetAlgoLined			% 增添end行
		\DontPrintSemicolon		% 不显示行末尾的分号
        \SetKwInOut{Input}{\textbf{Input}}		% Set the Input
        \SetKwInOut{Output}{\textbf{Output}}	% set the Output
		
		\Input{$k=0$,  $s=\tau\rho_{\hbox{\footnotesize average}}(AA^T)$ with $\tau\in[1/5,5]$, $r_0=\frac{3}{2s}\rho_{\hbox{\footnotesize average}}(A^T\!A)$, \\
$r_a=\kappa \rho_{\hbox{\footnotesize average}}(A^T\!A)$ with $\kappa\in[1/10,10]$, $\underline{r}=\sqrt{\frac{\rho_{\hbox{\scriptsize average}}(A^T\!A)}{\rho(A^T\!A)}}r_a<r_a$, \\
$\gamma=1$, $\theta=1.2$, $\mu=0.5$ and $\nu=0.9$ satisfying $\nu>\mu$.    }
		
        Start with $w^k=(x^k;y^k)$ \tcp*{Initialization}

		\While{the stopping criterium is not satisfied}
		{Step 1: Getting $\tilde{y}^k$ by solving dual subproblem \eqref{ADPA-y}.\;
          Step 2: Getting $\tilde{x}^k$ by solving the primal subproblem \eqref{ADPA-x}.\; \qquad\quad\, Calculating 
          $t=(\frac{1}{s}\|Ax^k-A\tilde{x}^k\|^2)/(r_k\|x^k-\tilde{x}^k\|^2)$.\;
            \While{$t>\nu$}
            {$r_k$ $\gets$ $r_k\times t \times \theta$.\;
             Getting $\tilde{x}^k$ by solving the primal subproblem \eqref{ADPA-x}. \; Calculating 
             $t=(\frac{1}{s}\|Ax^k-A\tilde{x}^k\|^2)/(r_k\|x^k-\tilde{x}^k\|^2).$
            }
            Step 3: Updating the new iterate $w^{k+1}=(x^{k+1};y^{k+1})$ by the correction step \eqref{ADPA-c}. \;
            \emph{Calculating the  stopping criterium.} \;
            Step 4: 		Decreasing $r_k$ if necessary.\;
            \If{$t\leq\mu$  and $r_k>\underline{r}$}
		{$r_{k+1}=\max\big\{r_k\times\frac{2}{3},\,\underline{r}\big\}$; \;
            \Else
		{$r_{k+1}=r_k$.}}
       % Step 5: 		k $\gets$ k + 1\;
		}
		\caption{\textbf{Adaptive dual primal hybrid gradient algorithm for \eqref{Min-Max}} }\label{alg2}
	\end{algorithm2e}

\quad
\begin{remark}
The adaptive DPHG scheme  (i.e., Algorithm \ref{alg2}) is more appropriate for the case where $\rho_{\hbox{\footnotesize average}}(A^T\!A)\ll \rho(A^T\!A)$ and the primal subproblem can be implemented easily.
\end{remark}

For the various parameters in Algorithm \ref{alg2}, we give the specific meaning of each parameter as follows.
\begin{itemize}
  \item $r_k$ represents the regularization parameter of the primal subproblem \eqref{ADPA-x}, and it needs to satisfy the relaxed condition \eqref{ccdp} dynamically.  
  \item $s$ represents the regularization parameter of the dual subproblem \eqref{ADPA-y}, and we suggest  to take $s=\tau\rho_{\hbox{\footnotesize average}}(AA^T)$ with $\tau\in[1/5,5]$ empirically.  
  \item $\gamma\in(0,2)$ represents the relaxation factor of the correction step \eqref{ADPA-c}, and we suggest to take $\gamma\in[0.8,1.6]$.
  \item $\nu\in(0,1)$ is the factor characterizing the closeness between the quadratic terms $\frac{1}{s}\|A(x^k-\tilde{x}^k)\|^2$ and $r_k\|x^k-\tilde{x}^k\|^2$, and we suggest to choose $\nu\in(0.8,1)$.
  \item $\theta$ denotes the increase rate of $r_k$, which also needs to satisfy $\theta>\frac{1}{\upsilon}$ to make  $t\theta>1$.
  \item $\mu\in(0,1)$ is used to control the decrease rate of $r_k$  to avoid $r_k$ is too huge, and we suggest to take $\mu\in(0.2,0.6)$ numerically.
  \item $\underline{r}\in(0,r_a)$ represents the lower bound of the $\{r_k\}$, which guarantees the primal regularization parameter $r_k$ is not too small. For instance, we can choose 
  \begin{equation}\label{lowbd}
    \underline{r}:=\sqrt{\frac{\rho_{\mathrm{\footnotesize average}}(A^T\!A)}{\rho(A^T\!A)}}r_a.
  \end{equation}
\end{itemize}

\begin{remark}
Due to the relaxed condition \eqref{ccdp} naturally holds when  $\nu r_ks>\rho(A^T\!A)$, and that 
$$r_k\leftarrow r_k\times t\times \theta=\frac{1}{s}\theta\frac{1}{\|x^k-\tilde{x}^k\|^2}\|Ax^k-A\tilde{x}^k\|^2\leq\frac{1}{s}\theta\rho(A^TA),$$
we have
\begin{equation}\label{uldp}
  r_k\leq\max\Big\{\frac{2}{s}\theta\rho(A^TA),\frac{1}{\nu s}\rho(A^TA)\Big\}=:\overline{r}.
\end{equation}
This indicates the varying regularization parameter $r_k$ also has a consistent upper bound $\overline{r}$.
\end{remark}

\subsection{Convergence analysis}\label{susus}

In this subsection, we establish the global convergence theory of the adaptive DPHG (i.e., Algorithm \ref{alg2}) based on its prediction-correction representation \eqref{ADPA-P}-\eqref{ADPA-c}. We first summarize the VI characterization of the prediction step \eqref{ADPA-P} by the following lemma. 

\begin{lemma}\label{DP-Q}
Let $Q_k^{DP}$ be the prediction matrix defined in \eqref{QDP} and $\tilde{w}^k=(\tilde{x}^k;\tilde{y}^k)$ be the predictor generated by the DPHG step \eqref{ADPA-P} with the given $w^{k}=(x^{k};y^{k})$. Then, the predictor $\tilde{w}^k$ satisfies the VI
\begin{equation}\label{ADPA-Q}
  \tilde{w}^k\in\Omega, \quad \theta(w) -\theta(\tilde{w}^{k}) + (w-\tilde{w}^{k})^T F(\tilde{w}^{k}) \ge (w-\tilde{w}^{k})^T  Q_k^{DP}(w^k  -\tilde{w}^{k}), \quad  \forall \; w\in  \Omega.
\end{equation}
\end{lemma}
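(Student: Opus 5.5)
We derive the two subproblem optimality conditions of the prediction step \eqref{ADPA-P} via Lemma \ref{CP-TF}, and then add them so that the result takes the form of the right-hand side of \eqref{ADPA-Q}.

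\emph{Dual subproblem.} Since $\Phi(x^k,y)=\theta_1(x^k)-y^TAx^k-\theta_2(y)$, the subproblem \eqref{ADPA-y} is equivalent to
$$\min\Big\{\theta_2(y)+y^TAx^k+\frac{s}{2}\|y-y^k\|^2 \;\big|\; y\in\mathcal{Y}\Big\}.$$
Apply Lemma \ref{CP-TF} with $f=\theta_2$ and with the smooth part $g(y)=y^TAx^k+\frac{s}{2}\|y-y^k\|^2$. This gives $\tilde y^k\in\mathcal{Y}$ and
$$\theta_2(y)-\theta_2(\tilde y^k)+(y-\tilde y^k)^T\big(Ax^k+s(\tilde y^k-y^k)\big)\ge 0,\quad\forall\,y\in\mathcal{Y}.$$
We want the term $A\tilde x^k$ to appear, because $F(\tilde w^k)$ contains it. So we write $Ax^k=A\tilde x^k+A(x^k-\tilde x^k)$. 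The inequality becomes
$$\theta_2(y)-\theta_2(\tilde y^k)+(y-\tilde y^k)^T(A\tilde x^k)\ge (y-\tilde y^k)^T\big(-A(x^k-\tilde x^k)+s(y^k-\tilde y^k)\big).$$

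\emph{Primal subproblem.} Similarly, \eqref{ADPA-x} is $\min\{\theta_1(x)-(\tilde y^k)^TAx+\frac{r_k}{2}\|x-x^k\|^2 \mid x\in\mathcal{X}\}$. Lemma \ref{CP-TF} gives $\tilde x^k\in\mathcal{X}$ and
$$\theta_1(x)-\theta_1(\tilde x^k)+(x-\tilde x^k)^T(-A^T\tilde y^k)\ge (x-\tilde x^k)^T r_k(x^k-\tilde x^k),\quad\forall\,x\in\mathcal{X}.$$
No rewriting is needed here, since $\tilde y^k$ already appears.

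\emph{Combining.} We add the two inequalities and use the notation \eqref{VI-notations}. The left-hand side is then $\theta(w)-\theta(\tilde w^k)+(w-\tilde w^k)^TF(\tilde w^k)$. The right-hand side is $(w-\tilde w^k)^T v$, where
$$v=\begin{pmatrix} r_k(x^k-\tilde x^k)\\ -A(x^k-\tilde x^k)+s(y^k-\tilde y^k)\end{pmatrix}.$$
Comparing $v$ with \eqref{QDP}, we see that $v=Q_k^{DP}(w^k-\tilde w^k)$, which is exactly \eqref{ADPA-Q}.

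The only delicate point is the applicability of Lemma \ref{CP-TF}, which requires each subproblem to have a nonempty solution set. This holds because each objective is proper convex plus a strongly convex quadratic over a closed convex set, and it is anyway implicit in the assumption that the subproblems are solvable. The rest is bookkeeping of signs in the off-diagonal block $-A$.
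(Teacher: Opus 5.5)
Your proposal is correct and follows the paper's proof essentially line by line. Both apply Lemma \ref{CP-TF} to \eqref{ADPA-y} and \eqref{ADPA-x}, split $Ax^k=A\tilde x^k+A(x^k-\tilde x^k)$ in the dual inequality, and add the two inequalities to read off $Q_k^{DP}(w^k-\tilde w^k)$. Your nonemptiness caveat is moot, since $\tilde y^k$ and $\tilde x^k$ are by definition elements of the respective solution sets.
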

\begin{proof}
For the dual subproblem \eqref{ADPA-y}, according to Lemma \ref{CP-TF}, we have   
$$\tilde{y}^k\in\mathcal{Y},\quad \theta_2(y)- \theta_2(\tilde{y}^k)+(y-\tilde{y}^k)^T\big\{Ax^k+s(\tilde{y}^k-y^k)\big\}\geq0, \quad \forall \; y\in \mathcal{Y},$$
which can be further rewritten as 
\begin{equation}\label{ADPA-Qy}
  \theta_2(y)- \theta_2(\tilde{y}^k)+(y-\tilde{y}^k)^TA\tilde{x}^k\geq (y-\tilde{y}^k)^T\big\{\!-\!A(x^k-\tilde{x}^k)+s(y^k-\tilde{y}^k)\big\}, \quad \forall \; y\in \mathcal{Y}.
\end{equation}
Similarly, it follows from Lemma \ref{CP-TF} that $\tilde{x}^k\in\mathcal{X}$ satisfies the inequality
\begin{equation}\label{ADPA-Qx}
   \theta_1(x)- \theta_1(\tilde{x}^k)+(x-\tilde{x}^k)^T(-A^T\tilde{y}^k)\geq (x-\tilde{x}^k)^Tr_k(x^k-\tilde{x}^k), \quad \forall \; x\in \mathcal{X}.
\end{equation}
By adding \eqref{ADPA-Qy} and \eqref{ADPA-Qx} together, we obtain 
\begin{eqnarray*}
% \nonumber to remove numbering (before each equation)
&&  [\theta_1(x)+\theta_2(y)]-[\theta_1(\tilde{x}^k)+\theta_2(\tilde{y}^k)]+\left(\!
                                                                            \begin{array}{c}
                                                                              x-\tilde{x}^k \\
                                                                              y-\tilde{y}^k \\
                                                                            \end{array}\!
                                                                          \right)^T\left(\!
                                                                                     \begin{array}{c}
                                                                                       -A^T\tilde{y}^k \\
                                                                                       A\tilde{x}^k \\
                                                                                     \end{array}\!
                                                                                   \right) \\[0.1cm]
&&  \;\; \geq\left(\!
                                                                            \begin{array}{c}
                                                                              x-\tilde{x}^k \\
                                                                              y-\tilde{y}^k \\
                                                                            \end{array}\!
                                                                          \right)^T\left(\!\!
                                                                                     \begin{array}{c}
                                                                                       r_k(x^k-\tilde{x}^k) \\
                                                                                       -A(x^k-\tilde{x}^k)+s(y^k-\tilde{y}^k) \\
                                                                                     \end{array}\!\!
                                                                                   \right).
\end{eqnarray*}
The assertion follows immediately by using the notations defined in \eqref{VI} and \eqref{QDP}.
\end{proof}
Throughout our discussion, we assume that $\|w^k-\tilde{w}^k\|\neq0$. Otherwise, $w^k=\tilde{w}^k$ would be a solution point of \eqref{VI} according to \eqref{ADPA-Q}. 
In fact, the right-hand side of \eqref{ADPA-Q} can be further bounded by a simple quadratic terms, which is summarized  by the following lemma.
\begin{lemma}
Let $\{w^k\}$ and $\{\tilde{w}^k\}$ be the sequences generated by Algorithm \ref{alg2} and the relaxed condition \eqref{ccdp} hold. Then, for any $\nu\in(0,1)$, we have
\begin{equation}\label{Key-inequality}
  (w^k-\tilde{w}^k)^TQ_k^{DP}(w^k-\tilde{w}^k)\geq\frac{1}{2}\big\{r_k\|x^k-\tilde{x}^k\|^2+s\|y^k-\tilde{y}^k\|^2\big\}.
\end{equation}
\end{lemma}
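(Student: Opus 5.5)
Write $u=x^k-\tilde x^k$ and $v=y^k-\tilde y^k$, so that $w^k-\tilde w^k=(u;v)$. The plan is to expand the quadratic form of $Q_k^{DP}$ from \eqref{QDP} and control the single cross term with Young's inequality and the relaxed condition \eqref{ccdp}. Only the symmetric part of $Q_k^{DP}$ contributes to the quadratic form, so
\[
(w^k-\tilde w^k)^TQ_k^{DP}(w^k-\tilde w^k)= r_k\|u\|^2 - v^TAu + s\|v\|^2 .
\]

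The only indefinite term is $-v^TAu$. I would bound it by the Cauchy--Schwarz and Young inequalities with a weight chosen to balance against $s\|v\|^2$:
\[
|v^TAu|\le \frac{s}{2}\|v\|^2+\frac{1}{2s}\|Au\|^2 .
\]
Then I apply the relaxed condition \eqref{ccdp}, which Algorithm \ref{alg2} enforces through its inner loop on $r_k$. It gives $\frac{1}{s}\|Au\|^2\le \nu r_k\|u\|^2$, and since $\nu<1$ this implies $\frac{1}{2s}\|Au\|^2\le \frac{1}{2}r_k\|u\|^2$.

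Substituting these bounds gives
\[
(w^k-\tilde w^k)^TQ_k^{DP}(w^k-\tilde w^k)\ge r_k\|u\|^2-\frac{\nu}{2}r_k\|u\|^2+\frac{s}{2}\|v\|^2\ge \frac12\big\{r_k\|u\|^2+s\|v\|^2\big\},
\]
which is \eqref{Key-inequality}. In fact the bound holds with the sharper coefficient $(1-\nu/2)$ on the $x$-part.

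No step here is a real obstacle. The one choice that matters is the Young weight: it must be $s/2$ on the $\|v\|^2$ side so that the $\|Au\|^2$ term appears as exactly $\frac1{2s}\|Au\|^2$, which is the quantity \eqref{ccdp} controls. A different weight would not match the condition. It is also worth stating explicitly that \eqref{ccdp} holds at the accepted $r_k$ when the prediction is finalized, because the inner while loop terminates only when $t\le\nu$.
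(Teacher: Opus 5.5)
Your proof is correct and is essentially the paper's argument: you expand the quadratic form to $r_k\|u\|^2 - v^TAu + s\|v\|^2$, bound the cross term by $\frac{1}{2}\{\frac{1}{s}\|Au\|^2 + s\|v\|^2\}$, invoke \eqref{ccdp}, and discard the nonnegative surplus $\frac{1}{2}(1-\nu)r_k\|u\|^2$. Your side remark that no other Young weight would work is overstated, since the weights $\beta s$ and $1/(\beta s)$ give the bound for any $\beta\in[\nu,1]$, but this does not affect the proof.
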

\begin{proof}
Recall  $Q_k^{DP}$ defined in \eqref{QDP}. It follows from  Cauchy-Schwarz inequality that
\begin{eqnarray}
% \nonumber to remove numbering (before each equation)
  \lefteqn{(w^k-\tilde{w}^k)^TQ_k^{DP}(w^k-\tilde{w}^k)  =  r_k\|x^k-\tilde{x}^k\|^2 - (y^k-\tilde{y}^k)^TA(x^k-\tilde{x}^k) + s\|y^k-\tilde{y}^k\|^2}     \nonumber \\
 &\geq&   r_k\|x^k-\tilde{x}^k\|^2 -\frac{1}{2}\Big\{\frac{1}{s}\|A(x^k-\tilde{x}^k)\|^2+s\|y^k-\tilde{y}^k\|^2\Big\}+ s\|y^k-\tilde{y}^k\|^2   \nonumber \\
 &\overset{\eqref{ccdp}}{\geq}&   r_k\|x^k-\tilde{x}^k\|^2 -\frac{1}{2}\Big\{\nu r_k\|x^k-\tilde{x}^k\|^2+s\|y^k-\tilde{y}^k\|^2\Big\}+ s\|y^k-\tilde{y}^k\|^2  \\
 &  = &    \frac{1}{2}\big\{(2-\nu)r_k\|x^k-\tilde{x}^k\|^2+s\|y^k-\tilde{y}^k\|^2\big\} \nonumber\\
 &\geq&  \frac{1}{2}\big\{r_k\|x^k-\tilde{x}^k\|^2+s\|y^k-\tilde{y}^k\|^2\big\}. \nonumber
\end{eqnarray}
This completes the proof of the lemma.
\end{proof}

Now we turn to show that $M_k^{DP}(w^k-\tilde{w}^k)$ is indeed an ascent direction of the unknown distance function $\|w-w^\ast\|_{H^{DP}}^2$ at the point $w^k$. 
To this end, by setting $w$ in \eqref{ADPA-Q} as any $w^\ast\in\Omega^\ast$ and using the identity \eqref{EQF}, we have 
\begin{eqnarray*}
% \nonumber to remove numbering (before each equation)
  \lefteqn{(\tilde{w}^{k}-w^\ast)^T Q_k^{DP}(w^k  -\tilde{w}^{k})\geq \theta(\tilde{w}^{k})-\theta(w^\ast)  + (\tilde{w}^{k}-w^\ast)^T F(\tilde{w}^{k})  } \\
   &=&     \theta(\tilde{w}^{k})-\theta(w^\ast)  + (\tilde{w}^{k}-w^\ast)^T F(w^\ast)\geq0. \qquad \qquad
\end{eqnarray*}
Since $\tilde{w}^{k}-w^\ast=(w^k-w^\ast)-(w^k-\tilde{w}^{k})$,  it further implies that
\begin{eqnarray}\label{K-Dirdp}
% \nonumber to remove numbering (before each equation)
  \lefteqn{ (w^k-w^\ast)^T Q_k^{DP}(w^k  -\tilde{w}^{k})}  \nonumber \\
  &\geq& (w^k-\tilde{w}^k)^TQ_k^{DP}(w^k  -\tilde{w}^{k})\overset{\eqref{Key-inequality}}{\geq}\frac{1}{2}\big\{r_k\|x^k-\tilde{x}^k\|^2+s\|y^k-\tilde{y}^k\|^2\big\}.
\end{eqnarray}
Note that the matrices $Q_k^{DP}$ and $H^{DP}$ defined in \eqref{QDP} are nonsingular. The inequality \eqref{K-Dirdp} can be further reformulated as 
\begin{eqnarray}\label{AD}
% \nonumber to remove numbering (before each equation)
\lefteqn{\Big\langle \nabla(\frac{1}{2}\|w-w^\ast\|_{H^{DP}}^2)\Big|_{w=w^k}, (H^{DP})^{-1}Q_k^{DP}(w^k  -\tilde{w}^{k})\Big\rangle} \nonumber\\
  &\overset{\eqref{DHMQ}}{=}& \Big\langle H^{DP}(w^k-w^\ast), M_k^{DP}(w^k  -\tilde{w}^{k})\Big\rangle  \geq\frac{1}{2}\big\{r_k\|x^k-\tilde{x}^k\|^2+s\|y^k-\tilde{y}^k\|^2\big\},
\end{eqnarray}
which indicates that $M_k^{DP}(w^k  -\tilde{w}^{k})$ is an ascent direction of the unknown distance function $\|w-w^\ast\|_{H^{DP}}^2$ at the point $w^k$. Consequently, we can generate the new iterate $w^{k+1}$ by the correction step
\begin{equation}\label{csdp}
 w^{k+1} = w^k - \alpha M_k^{DP}(w^k  -\tilde{w}^{k})
\end{equation}
with $\alpha>0$ the corresponding correction step size. As discussed in, e.g., \cite{HeYuanSIAMIS}, such a correction step could yield the contraction of proximity to the solution set of  \eqref{Min-Max} if an appropriate step size is taken.   

We now turn to determine the step size $\alpha$ in the correction step \eqref{csdp} to make the new iterate $w^{k+1}$
 closer to $\Omega^\ast$ as much as possible.  Note that
\begin{eqnarray}\label{cc-dp}
% \nonumber to remove numbering (before each equation)
  \lefteqn{\|w^{k+1}-w^\ast\|_{H^{DP}}^2} \nonumber \\
  &=& \|w^k - \alpha M_k^{DP}(w^k  -\tilde{w}^{k})-w^\ast\|_{H^{DP}}^2\\
  &=& \|w^k -w^\ast\|_{H^{DP}}^2-2\alpha(w^{k}-w^\ast)^TH^{DP}M_k^{DP}(w^k  -\tilde{w}^{k})+\alpha^2\|M_k^{DP}(w^k  -\tilde{w}^{k})\|_{H^{DP}}^2 \nonumber \\
  &\overset{\eqref{K-Dirdp}}{\leq}& \|w^k -w^\ast\|_{H^{DP}}^2-2\alpha(w^{k}-\tilde{w}^k)^TQ_k^{DP}(w^k  -\tilde{w}^{k})+\alpha^2\|M_k^{DP}(w^k  -\tilde{w}^{k})\|_{H^{DP}}^2. \nonumber
\end{eqnarray}
Let
$$q_k^{DP}(\alpha):=2\alpha(w^{k}-\tilde{w}^k)^TQ_k^{DP}(w^k  -\tilde{w}^{k})-\alpha^2\|M_k^{DP}(w^k  -\tilde{w}^{k})\|_{H^{DP}}^2.$$
By maximizing the quadratic term $q_k^{DP}(\alpha)$, we have
\begin{equation}\label{dp-stepsize}
  \alpha_k^\ast =\frac{(w^{k}-\tilde{w}^k)^TQ_k^{DP}(w^k  -\tilde{w}^{k})}{\|M_k^{DP}(w^k  -\tilde{w}^{k})\|_{H^{DP}}^2}.
\end{equation}
Owing to $q_k^{DP}(\alpha)$ is a lower bound for some contraction function, we can follow the similar technique in \cite{HeYuanSIAMIS} and introduce a relaxation factor $\gamma\in (0, 2)$ and set $\alpha_k=\gamma\alpha_k^\ast$, which immediately implies the tailored correction step \eqref{ADPA-c}.

The following lemma further characterize the contraction amount of the new iterate.

\begin{lemma}\label{lemma4}
Let $\{w^k\}$ and $\{\tilde{w}^k\}$ be the sequences generated by Algorithm \ref{alg2} and the relaxed condition \eqref{ccdp} hold, and let $\underline{r}$ be a lower bound of the sequence $\{r_k\}$. Then, for any $\gamma\in(0,2)$ and $w^\ast\in\Omega^\ast$, there exists a constant $C^{DP}>0$ such as
\begin{equation}\label{con-con-dp}
  \|w^{k+1}-w^\ast\|_{H^{DP}}^2\leq  \|w^k -w^\ast\|_{H^{DP}}^2-\gamma(2-\gamma)C^{DP}\|w^k-\tilde{w}^k\|_{\underline{H}^{DP}}^2,
\end{equation}
where
\begin{equation}\label{Hldp}
\underline{H}^{DP}=\left(
  \begin{array}{cc}
    \underline{r} I_n  & 0 \\
   0   & sI_m \\
  \end{array}
\right).
\end{equation}
\end{lemma}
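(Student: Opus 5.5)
We start from the contraction estimate \eqref{cc-dp} with $\alpha=\gamma\alpha_k^\ast$. Substituting the optimal step \eqref{dp-stepsize}, the quadratic term becomes
$$q_k^{DP}(\gamma\alpha_k^\ast)=2\gamma\alpha_k^\ast (w^k-\tilde w^k)^TQ_k^{DP}(w^k-\tilde w^k)-\gamma^2(\alpha_k^\ast)^2\|M_k^{DP}(w^k-\tilde w^k)\|_{H^{DP}}^2 .$$
Using $\alpha_k^\ast\|M_k^{DP}(w^k-\tilde w^k)\|_{H^{DP}}^2=(w^k-\tilde w^k)^TQ_k^{DP}(w^k-\tilde w^k)$, this simplifies to
$$q_k^{DP}(\gamma\alpha_k^\ast)=\gamma(2-\gamma)\,\alpha_k^\ast\,(w^k-\tilde w^k)^TQ_k^{DP}(w^k-\tilde w^k).$$
Hence
$$\|w^{k+1}-w^\ast\|_{H^{DP}}^2\le\|w^k-w^\ast\|_{H^{DP}}^2-\gamma(2-\gamma)\,\alpha_k^\ast\,(w^k-\tilde w^k)^TQ_k^{DP}(w^k-\tilde w^k),$$
and the task reduces to bounding the product $\alpha_k^\ast (w^k-\tilde w^k)^TQ_k^{DP}(w^k-\tilde w^k)$ from below by a constant multiple of $\|w^k-\tilde w^k\|^2_{\underline H^{DP}}$.

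\textbf{Lower bound on the quadratic form.} By \eqref{Key-inequality} and $r_k\ge\underline r$,
$$(w^k-\tilde w^k)^TQ_k^{DP}(w^k-\tilde w^k)\ge\tfrac12\big\{r_k\|x^k-\tilde x^k\|^2+s\|y^k-\tilde y^k\|^2\big\}\ge\tfrac12\|w^k-\tilde w^k\|^2_{\underline H^{DP}}.$$

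\textbf{Uniform lower bound on $\alpha_k^\ast$.} This is the main step. Write $u=x^k-\tilde x^k$ and $v=y^k-\tilde y^k$. Expanding the denominator,
$$\|M_k^{DP}(w^k-\tilde w^k)\|^2_{H^{DP}}=r_a\frac{r_k^2}{r_a^2}\|u\|^2+s\big\|v-\tfrac1s Au\big\|^2\le\frac{r_k^2}{r_a}\|u\|^2+2s\|v\|^2+\frac2s\|Au\|^2 .$$
Condition \eqref{ccdp} gives $\frac1s\|Au\|^2\le\nu r_k\|u\|^2$, so
$$\|M_k^{DP}(w^k-\tilde w^k)\|^2_{H^{DP}}\le\Big(\frac{r_k}{r_a}+2\nu\Big)r_k\|u\|^2+2s\|v\|^2\le c\,\big(r_k\|u\|^2+s\|v\|^2\big),$$
where $c=\max\{\overline r/r_a+2\nu,\,2\}$ and the upper bound \eqref{uldp}, $r_k\le\overline r$, has been used. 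Combining this with \eqref{Key-inequality} gives $\alpha_k^\ast\ge 1/(2c)$, which is independent of $k$.

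\textbf{Conclusion.} Putting the pieces together,
$$\alpha_k^\ast(w^k-\tilde w^k)^TQ_k^{DP}(w^k-\tilde w^k)\ge\frac{1}{2c}\cdot\frac12\|w^k-\tilde w^k\|^2_{\underline H^{DP}},$$
which yields \eqref{con-con-dp} with $C^{DP}=1/(4c)$. The only delicate point is the uniformity of $c$ in $k$. It relies on the upper bound $\overline r$ from \eqref{uldp} together with the assumed lower bound $\underline r$; $r_a$ and $s$ are fixed constants. If one prefers not to invoke $\overline r$, the alternative is to use $r_k/r_a\le\overline r/\underline r$-type bounds, but \eqref{uldp} is the natural route.
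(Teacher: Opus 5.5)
Your proof is correct. Its skeleton matches the paper's:
\begin{itemize}
\item set $\alpha=\gamma\alpha_k^\ast$ in \eqref{cc-dp} to get the decrease $\gamma(2-\gamma)\alpha_k^\ast(w^k-\tilde{w}^k)^TQ_k^{DP}(w^k-\tilde{w}^k)$;
\item bound the quadratic form below by $\frac{1}{2}\|w^k-\tilde{w}^k\|_{\underline{H}^{DP}}^2$ using \eqref{Key-inequality} and $r_k\ge\underline{r}$;
\item bound $\alpha_k^\ast$ below uniformly using $r_k\le\overline{r}$ from \eqref{uldp}.
\end{itemize}

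The difference is in how you obtain the uniform lower bound on $\alpha_k^\ast$. The paper uses the Loewner comparison $(M_k^{DP})^TH^{DP}M_k^{DP}\preceq\overline{H}^{DP}$, which replaces $r_k^2/r_a$ by $\overline{r}^2/r_a$. From this it deduces $\alpha_k^\ast\ge\|w^k-\tilde{w}^k\|_{\underline{H}^{DP}}^2/(2\|w^k-\tilde{w}^k\|_{\overline{H}^{DP}}^2)$ and concludes by norm equivalence. Its bound is therefore left implicit, essentially $\min\{\underline{r},s\}/(2\lambda_{\max}(\overline{H}^{DP}))$, and it carries both $\underline{r}$ and the worst-case block $\frac{1}{s}A^TA$.

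You instead expand $\|M_k^{DP}(w^k-\tilde{w}^k)\|_{H^{DP}}^2$ and split the coupling term with $\|a-b\|^2\le 2\|a\|^2+2\|b\|^2$. You then invoke \eqref{ccdp} a second time to absorb $\frac{1}{s}\|A(x^k-\tilde{x}^k)\|^2$. As a result, numerator and denominator are both compared with the same quantity $r_k\|x^k-\tilde{x}^k\|^2+s\|y^k-\tilde{y}^k\|^2$. This gives:
\begin{itemize}
\item the explicit bound $\alpha_k^\ast\ge 1/(2\max\{\overline{r}/r_a+2\nu,\,2\})$;
\item independence from $\underline{r}$, which enters only when you pass to the $\underline{H}^{DP}$-norm;
\item the dynamically enforced inequality in place of the global-spectrum block in the denominator, which fits the paper's motivation better.
\end{itemize}
The paper's route is shorter but less informative about the constant.

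Your closing aside about avoiding $\overline{r}$ should be dropped. A bound of the type $\overline{r}/\underline{r}$ still uses $\overline{r}$, and some upper bound on $r_k$ is unavoidable for a uniform lower bound on $\alpha_k^\ast$: when $y^k=\tilde{y}^k$ one has $\alpha_k^\ast\le r_a/r_k$. This aside plays no role in your argument.
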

\begin{proof}
To begin with, by setting $\alpha=\gamma\alpha_k^\ast$ in \eqref{cc-dp}, we have 
\begin{eqnarray*}
% \nonumber to remove numbering (before each equation)
  \lefteqn{\|w^{k+1}-w^\ast\|_{H^{DP}}^2}\\
  &\leq& \|w^k -w^\ast\|_{H^{DP}}^2-2\gamma\alpha_k^\ast(w^{k}-\tilde{w}^k)^TQ_k^{DP}(w^k  -\tilde{w}^{k})+\gamma^2(\alpha_k^\ast)^2\|M_k^{DP}(w^k  -\tilde{w}^{k})\|_{H^{DP}}^2 \\
  &\overset{\eqref{dp-stepsize}}{=}&\|w^k -w^\ast\|_{H^{DP}}^2-2\gamma\alpha_k^\ast(w^{k}-\tilde{w}^k)^TQ_k^{DP}(w^k  -\tilde{w}^{k})+\gamma^2\alpha_k^\ast(w^{k}-\tilde{w}^k)^TQ_k^{DP}(w^k  -\tilde{w}^{k}) \\
  &=& \|w^k -w^\ast\|_{H^{DP}}^2 - \gamma(2-\gamma)\alpha_k^\ast(w^{k}-\tilde{w}^k)^TQ_k^{DP}(w^k  -\tilde{w}^{k}).
\end{eqnarray*}
Furthermore, it follows from  \eqref{Key-inequality} that 
\begin{eqnarray*}
% \nonumber to remove numbering (before each equation)
  &&\|w^k -w^\ast\|_{H^{DP}}^2-\|w^{k+1}-w^\ast\|_{H^{DP}}^2  \geq  \gamma(2-\gamma)\alpha_k^\ast(w^{k}-\tilde{w}^k)^TQ_k^{DP}(w^k  -\tilde{w}^{k})  \\
   &&\quad \overset{\eqref{Key-inequality}}{\geq} \frac{1}{2}\gamma(2-\gamma)\alpha_k^\ast\big\{r_k\|x^k-\tilde{x}^k\|^2+s\|y^k-\tilde{y}^k\|^2\big\} \\
   &&\quad\;\; \geq\frac{1}{2} \gamma(2-\gamma)\alpha_k^\ast \big\{\underline{r}\|x^k-\tilde{x}^k\|^2+s\|y^k-\tilde{y}^k\|^2\big\}  \\
   &&\quad\;\; =\frac{1}{2}\gamma(2-\gamma)\alpha_k^\ast \|w^k-\tilde{w}^k\|_{\underline{H}^{DP}}^2.
\end{eqnarray*}
Then, it suffices to show that there exists a constant $C^{DP}$ satisfying $\alpha_k^\ast\geq 2C^{DP}$ uniformly. Recall the matrices $H^{DP}$ and $M_k^{DP}$ defined in \eqref{QDP} and \eqref{MDP} respectively. Since $\{r_k\}$ has an upper bound $\overline{r}$ given by \eqref{uldp},  we have
$$0\prec (M_k^{DP})^TH^{DP}M_k^{DP}=\left(\!\!
  \begin{array}{cc}
    \dfrac{r_k^2}{r_a}I_{n} + \frac{1}{s} A^T\!A & -A^T \\
   - A   & sI_m \\
  \end{array}\!\!
\right)\preceq\left(\!\!
  \begin{array}{cc}
    \dfrac{\overline{r}^2}{r_a}I_{n} + \frac{1}{s} A^T\!A & -A^T \\
   - A   & sI_m \\
  \end{array}\!\!
\right)=:\overline{H}^{DP}.$$
It further implies that
\begin{eqnarray*}
% \nonumber to remove numbering (before each equation)
 \alpha_k^\ast &=&\frac{(w^{k}-\tilde{w}^k)^TQ_k^{DP}(w^k  -\tilde{w}^{k})}{\|M_k^{DP}(w^k  -\tilde{w}^{k})\|_{H^{DP}}^2}
  \geq \frac{\|w^k-\tilde{w}^k\|_{\underline{H}^{DP}}^2}{2\|w^k  -\tilde{w}^{k}\|_{\overline{H}^{DP}}^2}.
\end{eqnarray*}
The assertion of lemma follows immediately by using the norm equivalence principle.
\end{proof}

With the assertion of Lemma \ref{lemma4}, we can immediately obtain the global convergence of the proposed adaptive DPHG, which is summarized as the following theorem.
\begin{theorem} 
Let $\{w^k \}$ and $\{\tilde{w}^k\}$  be the sequences generated by Algorithm \ref{alg2} and the relaxed condition \eqref{ccdp} hold. Then, the sequence  $\{w^k \}$  converges to some $w^\infty\in\Omega^\ast$.
\end{theorem}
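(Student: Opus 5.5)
The plan is the standard contraction argument for prediction-correction methods, with Lemma \ref{lemma4} as the engine. Fix any $w^\ast\in\Omega^\ast$. Summing \eqref{con-con-dp} over $k=0,1,\dots,K$ and using $\gamma(2-\gamma)C^{DP}>0$ gives
\begin{equation*}
\gamma(2-\gamma)C^{DP}\sum_{k=0}^{\infty}\|w^k-\tilde{w}^k\|_{\underline{H}^{DP}}^2\leq \|w^0-w^\ast\|_{H^{DP}}^2<\infty .
\end{equation*}
Since $\underline{H}^{DP}\succ 0$ (because $\underline{r}>0$ and $s>0$), this yields $\|w^k-\tilde{w}^k\|\to 0$. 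Moreover, $\{\|w^k-w^\ast\|_{H^{DP}}\}$ is nonincreasing, so $\{w^k\}$ is bounded, and hence so is $\{\tilde{w}^k\}$.

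Next I identify the cluster points. Boundedness gives a subsequence $w^{k_j}\to w^\infty$, and then also $\tilde{w}^{k_j}\to w^\infty$. I pass to the limit in the prediction VI \eqref{ADPA-Q} along this subsequence. The right-hand side $(w-\tilde{w}^{k})^TQ_k^{DP}(w^k-\tilde{w}^k)$ tends to zero for each fixed $w\in\Omega$. This holds because $\|Q_k^{DP}\|$ is uniformly bounded: its entries are $r_k\le\overline{r}$ from \eqref{uldp}, $A$, and $s$. The left-hand side needs $\theta(\tilde{w}^{k_j})$ to behave well. Because $\theta_1,\theta_2$ are finite-valued convex functions on $\Re^n,\Re^m$, they are continuous, so $\theta(\tilde{w}^{k_j})\to\theta(w^\infty)$. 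Also $\tilde{w}^{k}\in\Omega$ and $\Omega$ is closed, so $w^\infty\in\Omega$. The limit therefore gives $\theta(w)-\theta(w^\infty)+(w-w^\infty)^TF(w^\infty)\ge 0$ for all $w\in\Omega$, that is, $w^\infty\in\Omega^\ast$.

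Finally, I show the whole sequence converges. Since \eqref{con-con-dp} holds for every solution point, I apply it with $w^\ast=w^\infty$. Then $\|w^k-w^\infty\|_{H^{DP}}$ is nonincreasing, and its subsequence along $k_j$ tends to $0$, so the whole sequence tends to $0$. As $H^{DP}\succ0$, this gives $w^k\to w^\infty$.

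The main obstacle is uniformity in $k$. It enters in two places: the constant $C^{DP}$ in Lemma \ref{lemma4}, and the boundedness of $Q_k^{DP}$ needed in the limit passage. Both rely on $\underline{r}\le r_k\le\overline{r}$ for all $k$. Algorithm \ref{alg2} enforces the lower bound through the $\max\{\cdot,\underline{r}\}$ rule, and \eqref{uldp} gives the upper bound. I would also check that the inner while-loop terminates, so that \eqref{ccdp} actually holds at each $k$. It does: each pass multiplies $r_k$ by $t\theta$, which makes the new $r_k$ at least $\frac{\theta}{s}\cdot\frac{\|A(x^k-\tilde{x}^k)\|^2}{\|x^k-\tilde{x}^k\|^2}$ for the current $\tilde{x}^k$. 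Once $\nu r_k s>\rho(A^TA)$, the test $t\le\nu$ passes automatically. I would argue that this threshold is reached after finitely many passes; this is the delicate point, and I would handle it via the bound in \eqref{uldp}.
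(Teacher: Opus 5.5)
Your proposal is correct and follows the paper's proof essentially step for step. Both arguments use Fej\'er monotonicity in the fixed $H^{DP}$-norm from Lemma \ref{lemma4}, summability to get $\|w^k-\tilde w^k\|\to 0$, a limit in \eqref{ADPA-Q} along a convergent subsequence, and then \eqref{con-con-dp} again with $w^\ast=w^\infty$. Your use of $r_k\le\overline{r}$ to send the right-hand side of \eqref{ADPA-Q} to zero makes explicit a detail the paper leaves implicit, and the inner-loop termination check is not needed because the theorem assumes \eqref{ccdp} holds. If you do want termination, the mechanism is that each pass multiplies $r_k$ by $t\theta>\nu\theta>1$ while \eqref{uldp} caps $r_k$, so the loop must stop after finitely many passes.
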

\begin{proof}
To begin with, it follows from \eqref{con-con-dp} that the generated sequence $\{w^k\}$ is bounded. Let $w^{\infty}$ be a cluster point of $\{w^k\}$ and $\{w^{k_j}\}$ be a subsequence converging to $w^{\infty}$.  By summing the inequality  \eqref{con-con-dp} over $k=0,1,\ldots,\infty$, we obtain
$$\sum_{k=0}^{\infty}\|w^k -\tilde{w}^k\|_{\underline{H}^{DP}}^2\leq\frac{1}{\gamma(2-\gamma)C^{DP}}\|w^0 -w^\ast\|_{H^{DP}}^2,$$
which further implies
\begin{equation}\label{Contrac-uut-dp}
  \lim_{k\to \infty}\|w^k -\tilde{w}^k\|_{\underline{H}^{DP}}=0.
\end{equation}
Moreover, it follows from \eqref{Contrac-uut-dp} that the sequence $\{\tilde{w}^{k_j}\}$ also converges to $w^{\infty}$. Then, according to \eqref{ADPA-Q}, we have
$$\tilde{w}^{k_j}\in \Omega, \quad \theta(w)-\theta(\tilde{w}^{k_j}) + (w-\tilde{w}^{k_j})^TF(\tilde{w}^{k_j}) \ge (w-\tilde{w}^{k_j})^TQ_k^{DP}(w^{k_j}-\tilde{w}^{k_j}), \quad \forall\; w\in \Omega.$$
Using the continuity of $\theta(w)$ and $F(w)$, we further obtain
$$w^{\infty}\in \Omega, \quad \theta(w)-\theta(w^{\infty}) + (w- w^{\infty})^T F(w^{\infty}) \ge 0, \quad \forall\; w\in \Omega, $$
which means  $w^{\infty}$ is a solution point of the VI \eqref{VI}.  Furthermore, it follows from \eqref{con-con-dp} that
$$\|w^{k+1} - w^{\infty}\|_{H^{DP}} \le \|w^k  - w^{\infty}\|_{H^{DP}},$$
which means that the sequence $\left\{\left\|w^{k}-w^{\infty}\right\|_{H^{DP}}\right\}_{k \geq 0}$ is nonincreasing and  it is thus convergent. Since  $\lim_{j\rightarrow\infty}\|w^{k_j}-w^{\infty}\|_{H^{DP}}=0$, we obtain that the sequence $\{w^k\}$ also converges to $w^\infty$. This completes the proof of the theorem.
\end{proof}

\section{Adaptive primal dual hybrid gradient algorithm}\label{sec4}
\setcounter{equation}{0}
\setcounter{remark}{0}

In this section, we assume that the dual subproblem of the PDHG can be implemented easily  and extend the proposed adaptive DPHG to the way that the primal regularization parameter $r$ is fixed while the dual regularization parameter $s_k$ is varying. 
\subsection{Some fundamental matrices and their relationship}
Similarly, let us first define some fundamental matrices for further analysis. The prediction matrix  $Q_{k}^{PD}$  and the average norm matrix $H^{PD}$ are defined as
\begin{equation}\label{QPD}
  Q_{k}^{PD}=\left(
               \begin{array}{cc}
                 rI_n & A^T \\
                 0 & s_kI_m \\
               \end{array}
             \right) \quad \hbox{and} \quad   H^{PD}=\left(
               \begin{array}{cc}
                 rI_n & 0 \\
                 0 & s_aI_m \\
               \end{array}
             \right)
\end{equation}
respectively, where 
\begin{equation}
 s_a:=\kappa\frac{1}{r} \rho_{\hbox{\footnotesize average}}(AA^T)
\end{equation}
with $\kappa>0$ a balanced factor. Furthermore, we define the correction matrix $M_{k}^{PD}$ as
\begin{equation}\label{MPD}
  M_k^{PD}=\left(
               \begin{array}{cc}
                 I_n & \frac{1}{r}A^T \\[0.2cm]
                 0 & \frac{s_k}{s_a}I_m \\
               \end{array}
             \right).
\end{equation}
It is trivial to verify that the matrices defined above satisfy the identity
\begin{equation}\label{HMQ}
  Q_k^{PD}=H^{PD}M_k^{PD}.
\end{equation}

\subsection{Algorithm}
By exploiting the prediction-correction algorithmic framework, the novel adaptive primal dual hybrid gradient algorithm (PDHG)  takes the following fundamental scheme.
\begin{center}\fbox{
 \begin{minipage}{16.0cm}
\smallskip
\noindent{\bf{Prediction-correction representation of the adaptive PDHG. }}
\medskip

\textbf{(Prediction Step)} With the given $w^k=(x^k;y^k)$, we generate the predictor $\tilde{w}^k=(\tilde{x}^k;\tilde{y}^k)$ via
 \begin{subequations} \label{APDA-P}
\begin{numcases}{}
\label{APDA-x}  \tilde{x}^k \in \arg\min \big\{ \Phi(x,y^k) + \frac{r}{2}\|x-x^k\|^2 \;|\; x\in {\cal X} \big\},\\[0.1cm]
\label{APDA-y}  \tilde{y}^k \in \arg\max \big\{ \Phi(\tilde{x}^k, y) - \frac{s_k}{2}\|y-y^k\|^2  \;|\; y\in {\cal Y} \big\}.
\end{numcases}
\end{subequations}
\textbf{(Correction Step)} With the given matrices defined in \eqref{QPD} and \eqref{MPD}, the new iterate $w^{k+1}=(x^{k+1};y^{k+1})$ is updated by 
 \begin{subequations} \label{APDA-c}
\begin{equation}\label{APDA-C}
\left(\!
  \begin{array}{c}
    x^{k+1} \\[0.2cm]
    y^{k+1}
  \end{array}\!
\right)=
\left(\!
  \begin{array}{c}
    x^k \\[0.2cm]
    y^k
  \end{array}\!
\right)-\gamma\alpha_k^\ast\left(
               \begin{array}{cc}
                 I_n & \frac{1}{r}A^T \\[0.2cm]
                 0 & \frac{s_k}{s_a}I_m \\
               \end{array}
             \right)
\left(\!
  \begin{array}{c}
    x^k-\tilde{x}^k \\[0.2cm]
    y^k-\tilde{y}^k
  \end{array}\!
\right),
\end{equation}
where
\begin{equation}\label{pd-step}
  \gamma\in(0,2) \quad \hbox{and} \quad  \alpha_k^\ast = \frac{(w^k-\tilde{w}^k)^TQ_k^{PD}(w^k-\tilde{w}^k)}{\|M_k^{PD}(w^k-\tilde{w}^k)\|_{H^{PD}}^2}.
\end{equation}
\end{subequations}
\end{minipage}}
\end{center}

In contrast to the  prediction-correction algorithmic scheme \eqref{ADPA-P}-\eqref{ADPA-c}, the extended method first adopts the prototype PDHG \eqref{PDA} as a predictor, followed by a simple correction step \eqref{APDA-c}, and they both enjoys the same features and computational complexity. Owing to the extended method takes the PDHG  \eqref{PDA} as a predictor, we call briefly the novel algorithm the adaptive PDHG. To implement the adaptive PDHG concretely, we first choose a fixed primal regularization parameter $r$ to satisfy the approximation 
$$\|Ax-Ax^k\|^2\approx r\|x-x^k\|^2.$$
Since  
\begin{equation}\label{aver-con1}
  \|A(x^k-\tilde{x}^k)\|^2\approx \rho_{\hbox{\footnotesize average}}(A^T\!A)\|x^k-\tilde{x}^k\|^2
\end{equation}
in the sense of probability expectation, we can choose  
$$r=\tau\rho_{\hbox{\footnotesize average}}(A^T\!A)=\frac{1}{n}\tau\hbox{Trace}(A^T\!A),$$ 
in which the balanced factor $\tau$ is empirically suggested to take $\tau\in[1/5,5]$. 

For the chosen $r$,  we then adjust the dual regularization parameter $s_k$ dynamically to satisfy the dynamic condition
\begin{equation}\label{ccpd}
\frac{1}{r}\|A^T(y^k-\tilde{y}^k)\|^2\leq\nu s_k\|y^k-\tilde{y}^k\|^2 \;\;\hbox{with}\;\; \nu\in(0,1).
\end{equation}
Similarly, due to
$$ \|A^T(y^k-\tilde{y}^k)\|^2\approx \rho_{\hbox{\footnotesize average}}(AA^T)\|y^k-\tilde{y}^k\|^2.$$
We can set the initial parameter $s_0$ as
\begin{equation}\label{s0}
  s_0=\frac{3}{2r}\rho_{\hbox{\footnotesize average}}(AA^T)=\frac{3}{2rm}\hbox{Trace}(AA^T).
\end{equation} 
Note that
$$\|A^T(y^k-\tilde{y}^k)\|^2\approx \rho_{\hbox{\footnotesize average}}(AA^T)\|y^k-\tilde{y}^k\|^2\overset{\eqref{s0}}{=}\frac{2}{3} rs_0\|y^k-\tilde{y}^k\|^2.$$
It implies that
$$\frac{1}{r}\|A^T(y^k-\tilde{y}^k)\|^2\approx \frac{2}{3} s_0\|y^k-\tilde{y}^k\|^2,$$
which indeed corresponds to the relaxed condition \eqref{ccpd} formally. Based on the dynamic condition \eqref{ccpd}, the adaptive PDHG is specified as the following scheme. 

%an it thus provides an initial condition to characterize the condition \eqref{ccpd}. Based on the dynamic condition \eqref{ccpd}, the adaptive PDHG can be summarized as the following scheme. 
%the varying condition \eqref{ccdp} is more relaxed than the conservative condition \eqref{CP-CC}, which could result in a smaller regularization  $r_k$ and thus a potential acceleration.
\vspace{0.1cm}
\begin{algorithm2e}[H]
		\SetAlgoLined			% 增添end行
		\DontPrintSemicolon		% 不显示行末尾的分号
        \SetKwInOut{Input}{\textbf{Input}}		% Set the Input
        \SetKwInOut{Output}{\textbf{Output}}	% set the Output
		
		\Input{$k=0$,  $r=\tau\rho_{\hbox{\footnotesize average}}(A^T\!A)$ with $\tau\in[1/5,5]$, $s_0=\frac{3}{2r}\rho_{\hbox{\footnotesize average}}(A\!A^T)$, \\
        $s_a=\kappa \rho_{\hbox{\footnotesize average}}(AA^T)$ with $\kappa\in[1/10,10]$, $\underline{s}=\sqrt{\frac{\rho_{\hbox{\scriptsize average}}(AA^T)}{\rho(AA^T)}}s_a<s_a$, \\ $\gamma=1$, $\theta=1.2$, $\mu=0.5$ and $\nu=0.9$ satisfying $\nu>\mu$.   }
		
        Start with $w^k=(x^k;y^k)$ \tcp*{Initialization}

		\While{the stopping criterium is not satisfied}
		{Step 1: Getting $\tilde{x}^k$ by solving the primal subproblem \eqref{APDA-x}.\;
          Step 2: Getting $\tilde{y}^k$ by solving the dual subproblem \eqref{APDA-y}.\; \qquad\quad\, Calculating 
          $t=(\frac{1}{r}\|A^Ty^k-A^T\tilde{y}^k\|^2)/(s_k\|y^k-\tilde{y}^k\|^2)$.\;
            \While{$t>\nu$}
            {$s_k$ $\gets$ $s_k\times t \times \theta$.\;
             Getting $\tilde{y}^k$ by solving the dual subproblem \eqref{APDA-y}. \; Calculating 
             $t=(\frac{1}{r}\|A^Ty^k-A^T\tilde{y}^k\|^2)/(s_k\|y^k-\tilde{y}^k\|^2).$
            }
            Step 3: Updating the new iterate $w^{k+1}=(x^{k+1};y^{k+1})$ by the correction step \eqref{APDA-c}. \;
            \emph{Calculating the  stopping criterium.} \;
            Step 4: 	Decreasing $s_k$ if necessary.\;
            \If{$t\leq\mu$ and $s_k>\underline{s}$}
		{$s_{k+1}=\max\big\{s_k\times\frac{2}{3},\,\underline{s}\big\}$; \;
            \Else
		{$s_{k+1}=s_k$.}}
       % Step 5: 		k $\gets$ k + 1\;
		}
		\caption{\textbf{Adaptive primal dual hybrid gradient algorithm  for \eqref{Min-Max}} }\label{alg1}
	\end{algorithm2e}

\quad
\begin{remark}
The proposed adaptive PDHG (i.e., Algorithm \ref{alg1}) is more suitable for the case where $\rho_{\hbox{\footnotesize average}}(AA^T)\ll \rho(AA^T)$ and the dual subproblem can be implemented easily.
\end{remark}

For the various parameters in Algorithm \ref{alg1}, we also give the concrete meaning of each parameter as follows.
\begin{itemize}
  \item $r$ represents the  regularization parameter of the primal subproblem \eqref{APDA-x}, and we suggest to take $r=\tau\rho_{\hbox{\footnotesize average}}(A^T\!A)$ with $\tau\in[1/5,5]$. 
  \item $s_k$ represents the regularization parameter of the dual subproblem \eqref{APDA-y}, and it needs to satisfy the relaxed condition \eqref{ccpd} dynamically. 
   \item $\gamma\in(0,2)$ represents the relaxation factor of the correction step \eqref{APDA-c}, and we suggest to take $\gamma\in[0.8,1.6]$ in parctice.
  \item $\nu\in(0,1)$ is the parameter characterizing the closeness between the quadratic terms $\frac{1}{r}\|A^T(y^k-\tilde{y}^k)\|^2$ and $s_k\|y^k-\tilde{y}^k\|^2$, and we suggest to choose $\nu\in(0.8,1)$.
  \item $\mu\in(0,1)$ is used to dynamically control the decrease of $s_k$ to avoid $s_k$ is too huge, and we suggest to take $\mu\in(0.2,0.6)$.
  \item $\theta$ determines the increase rate of $s_k$, which needs to satisfy $\theta>\frac{1}{\upsilon}$ to make  $t\theta>1$.
  \item $\underline{s}\in(0,s_a)$ represents the lower bound of  $\{s_k\}$, which guarantees the regularization parameter $s_k$ is not too tiny. In parctice, we can choose 
  \begin{equation}\label{lowbd-PD}
    \underline{s}:=\sqrt{\frac{\rho_{\mathrm{\footnotesize average}}(AA^T)}{\rho(AA^T)}}s_a.
  \end{equation}
\end{itemize}

\begin{remark}
Since the relaxed condition \eqref{ccpd} naturally holds when  $\nu rs_k>\rho(AA^T)$ and 
$$s_k\leftarrow s_k\times t\times \theta=\frac{1}{r}\theta\frac{1}{\|y^k-\tilde{y}^k\|^2}\|A^Ty^k-A^T\tilde{y}^k\|^2\leq\frac{1}{r}\theta\rho(AA^T),$$
it implies that
\begin{equation}\label{ulpd}
  s_k\leq\max\Big\{\frac{2}{r}\theta\rho(AA^T),\frac{1}{\nu r}\rho(AA^T)\Big\}=:\overline{s}.
\end{equation}
The varying dual regularization parameter $s_k$ hence has an uniformly upper bound $\overline{s}$.
\end{remark}

\subsection{Convergence analysis}
Following the similar analysis route in Subsection \ref{susus}, we establish the global convergence theory of the proposed adaptive PDHG (i.e., Algorithm \ref{alg1}) based on its prediction-correction representation \eqref{APDA-P}-\eqref{APDA-c}. We first specify the VI characterization of the prediction step \eqref{APDA-P} by the following lemma. 
\begin{lemma}\label{PD-Q}
Let $Q_k^{PD}$ be the prediction matrix defined in \eqref{QPD} and $\tilde{w}^k=(\tilde{x}^k;\tilde{y}^k)$ be the predictor generated by the PDHG prediction step \eqref{APDA-P} with the given $w^{k}=(x^k;y^k)$. Then, the predictor $\tilde{w}^k$ satisfies the VI
\begin{equation}\label{APDA-Q}
  \tilde{w}^k\in\Omega, \quad \theta(w) -\theta(\tilde{w}^{k}) + (w-\tilde{w}^{k})^T F(\tilde{w}^{k}) \ge (w-\tilde{w}^{k})^T  Q_k^{PD}(w^k  -\tilde{w}^{k}), \quad  \forall \; w\in  \Omega.
\end{equation}
\end{lemma}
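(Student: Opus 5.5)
The plan mirrors the proof of Lemma \ref{DP-Q}: write down the optimality condition of each subproblem in \eqref{APDA-P} via Lemma \ref{CP-TF}, rearrange each so that the left-hand side has the form required by the VI \eqref{VI-problem} evaluated at $\tilde{w}^k$, and then add the two inequalities.

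\textbf{The primal subproblem.} For \eqref{APDA-x}, the objective is $\theta_1(x) - (y^k)^TAx + \frac{r}{2}\|x-x^k\|^2$ (up to a constant in $x$). Take $f=\theta_1$ and let $g$ be the smooth remainder, whose gradient at $\tilde{x}^k$ is $-A^Ty^k + r(\tilde{x}^k-x^k)$. Lemma \ref{CP-TF} then gives $\tilde{x}^k\in\mathcal{X}$ and
$$\theta_1(x)-\theta_1(\tilde{x}^k)+(x-\tilde{x}^k)^T\big\{-A^Ty^k+r(\tilde{x}^k-x^k)\big\}\ge 0,\quad \forall\, x\in\mathcal{X}.$$
The coupling term involves $y^k$, not $\tilde{y}^k$. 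Writing $-A^Ty^k = -A^T\tilde{y}^k - A^T(y^k-\tilde{y}^k)$ yields
$$\theta_1(x)-\theta_1(\tilde{x}^k)+(x-\tilde{x}^k)^T(-A^T\tilde{y}^k)\ge (x-\tilde{x}^k)^T\big\{r(x^k-\tilde{x}^k)+A^T(y^k-\tilde{y}^k)\big\}.$$
This is where the off-diagonal block $A^T$ in the first row of $Q_k^{PD}$ comes from.

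\textbf{The dual subproblem.} For \eqref{APDA-y}, maximizing $\Phi(\tilde{x}^k,y)-\frac{s_k}{2}\|y-y^k\|^2$ is equivalent to minimizing $\theta_2(y)+y^TA\tilde{x}^k+\frac{s_k}{2}\|y-y^k\|^2$. Lemma \ref{CP-TF} gives $\tilde{y}^k\in\mathcal{Y}$ and
$$\theta_2(y)-\theta_2(\tilde{y}^k)+(y-\tilde{y}^k)^TA\tilde{x}^k\ge (y-\tilde{y}^k)^T s_k(y^k-\tilde{y}^k),\quad \forall\, y\in\mathcal{Y}.$$
Here the coupling term already involves $\tilde{x}^k$, so no rearrangement is needed. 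This corresponds to the zero lower-left block of $Q_k^{PD}$.

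\textbf{Assembling.} Adding the two inequalities, the left-hand side is $\theta(w)-\theta(\tilde{w}^k)+(w-\tilde{w}^k)^TF(\tilde{w}^k)$ by the notation \eqref{VI-notations}. The right-hand side is
$$(w-\tilde{w}^k)^T\begin{pmatrix} r(x^k-\tilde{x}^k)+A^T(y^k-\tilde{y}^k)\\ s_k(y^k-\tilde{y}^k)\end{pmatrix},$$
which is exactly $(w-\tilde{w}^k)^TQ_k^{PD}(w^k-\tilde{w}^k)$ by \eqref{QPD}. Together with $\tilde{w}^k\in\Omega$, this is \eqref{APDA-Q}.

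There is no genuine obstacle here. The only point requiring care is the sign bookkeeping in the primal rearrangement, which must produce $+A^T(y^k-\tilde{y}^k)$, and checking that it matches the upper-right block of $Q_k^{PD}$.
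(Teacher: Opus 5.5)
Your proposal is correct and follows essentially the same route as the paper's proof: apply Lemma \ref{CP-TF} to each subproblem, move $-A^T\tilde{y}^k$ to the left in the primal inequality so that $A^T(y^k-\tilde{y}^k)$ appears on the right, and add the primal and dual inequalities. Your sign bookkeeping matches the blocks of $Q_k^{PD}$ in \eqref{QPD}.
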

\begin{proof}
For the primal subproblem \eqref{APDA-x}, it follows from Lemma \ref{CP-TF} that $\tilde{x}^k\in\mathcal{X}$ satisfies the variational inequality 
$$\theta_1(x)- \theta_1(\tilde{x}^k)+(x-\tilde{x}^k)^T\big\{-A^Ty^k+r(\tilde{x}^k-x^k)\big\}\geq0, \quad \forall \; x\in \mathcal{X},$$
which can be further rewritten as
\begin{equation}\label{APDA-Qx}
  \theta_1(x)- \theta_1(\tilde{x}^k)+(x-\tilde{x}^k)^T(-A^T\tilde{y}^k)\geq (x-\tilde{x}^k)^T\big\{r(x^k-\tilde{x}^k)+A^T(y^k-\tilde{y}^k)\big\}, \quad \forall \; x\in \mathcal{X}.
\end{equation}
Similarly, the dual variable $\tilde{y}^k$ given by \eqref{APDA-y} satisfies the inequality
\begin{equation}\label{APDA-Qy}
 \tilde{y}^k\in\mathcal{Y},\quad  \theta_2(y)- \theta_2(\tilde{y}^k)+(y-\tilde{y}^k)^TA\tilde{x}^k\geq (y-\tilde{y}^k)^Ts_k(y^k-\tilde{y}^k), \quad \forall \; y\in \mathcal{Y}.
\end{equation}
Combining the inequality \eqref{APDA-Qx} with \eqref{APDA-Qy} together, we have 
\begin{eqnarray*}
% \nonumber to remove numbering (before each equation)
&&  [\theta_1(x)+\theta_2(y)]-[\theta_1(\tilde{x}^k)+\theta_2(\tilde{y}^k)]+\left(\!
                                                                            \begin{array}{c}
                                                                              x-\tilde{x}^k \\
                                                                              y-\tilde{y}^k \\
                                                                            \end{array}\!
                                                                          \right)^T\left(\!
                                                                                     \begin{array}{c}
                                                                                       -A^T\tilde{y}^k \\
                                                                                       A\tilde{x}^k \\
                                                                                     \end{array}\!
                                                                                   \right) \\[0.1cm]
&&  \;\; \geq\left(\!
                                                                            \begin{array}{c}
                                                                              x-\tilde{x}^k \\
                                                                              y-\tilde{y}^k \\
                                                                            \end{array}\!
                                                                          \right)^T\left(\!\!
                                                                                     \begin{array}{c}
                                                                                       r(x^k-\tilde{x}^k)+A^T(y^k-\tilde{y}^k) \\
                                                                                       s_k(y^k-\tilde{y}^k) \\
                                                                                     \end{array}\!\!
                                                                                   \right).
\end{eqnarray*}
The assertion of the lemma follows immediately by using these notations defined in \eqref{VI} and \eqref{QPD}.
\end{proof}
\begin{lemma}
Let $\{w^k\}$ and $\{\tilde{w}^k\}$ be the sequences generated by Algorithm \ref{alg1} and the condition \eqref{ccpd} hold. Then, for any $\nu\in(0,1)$, we have
\begin{equation}\label{Key-inequality1}
  (w^k-\tilde{w}^k)^TQ_k^{PD}(w^k-\tilde{w}^k)\geq\frac{1}{2}\big\{r\|x^k-\tilde{x}^k\|^2+s_k\|y^k-\tilde{y}^k\|^2\big\}.
\end{equation}
\end{lemma}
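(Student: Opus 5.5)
The plan is to mirror the proof of \eqref{Key-inequality}, with the roles of the primal and dual blocks interchanged. First I will expand the quadratic form using the definition of $Q_k^{PD}$ in \eqref{QPD}. Writing $u=x^k-\tilde{x}^k$ and $v=y^k-\tilde{y}^k$, we have
$$(w^k-\tilde{w}^k)^TQ_k^{PD}(w^k-\tilde{w}^k)=r\|u\|^2+u^TA^Tv+s_k\|v\|^2 .$$
The only difference from the DPHG case is that the coupling term now enters as $u^TA^Tv=(A^Tv)^Tu$, coming from the upper-right block, and it appears with a plus sign.

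The key step is to bound the cross term from below by Cauchy--Schwarz, weighted so that the dual quantity $\frac{1}{r}\|A^Tv\|^2$ appears, since \eqref{ccpd} controls exactly that quantity. Concretely,
$$u^TA^Tv\ \geq\ -\frac{1}{2}\Big\{r\|u\|^2+\frac{1}{r}\|A^Tv\|^2\Big\}.$$
Invoking \eqref{ccpd} then gives $\frac{1}{r}\|A^Tv\|^2\le \nu s_k\|v\|^2$.

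Substituting these bounds yields
$$(w^k-\tilde{w}^k)^TQ_k^{PD}(w^k-\tilde{w}^k)\ \ge\ \frac{1}{2}r\|u\|^2+\Big(1-\frac{\nu}{2}\Big)s_k\|v\|^2=\frac{1}{2}\big\{r\|u\|^2+(2-\nu)s_k\|v\|^2\big\}.$$
Since $\nu\in(0,1)$ gives $2-\nu\ge 1$, the right-hand side is at least $\frac{1}{2}\{r\|u\|^2+s_k\|v\|^2\}$, which is \eqref{Key-inequality1}.

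There is no real obstacle here. The only point requiring care is the choice of weights in Young's inequality: the factor $\frac{1}{r}$ must be placed on the $A^Tv$ term, the opposite of the DPHG lemma. This way the relaxed condition \eqref{ccpd} on $s_k$ absorbs the coupling term, while the fixed $r$ keeps exactly half of the primal term.
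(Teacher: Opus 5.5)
Your proposal is correct and is essentially the paper's own proof. Like the paper, you expand the quadratic form of $Q_k^{PD}$, bound the cross term $(x^k-\tilde{x}^k)^TA^T(y^k-\tilde{y}^k)$ from below by Young's inequality with weights $r$ and $\frac{1}{r}$, apply \eqref{ccpd}, and then discard the surplus $\frac{1-\nu}{2}s_k\|y^k-\tilde{y}^k\|^2$.
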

\begin{proof}
Recall the prediction matrix $Q_k^{PD}$ defined in \eqref{QPD}. It follows from  Cauchy-Schwarz inequality that
\begin{eqnarray}
% \nonumber to remove numbering (before each equation)
  \lefteqn{(w^k-\tilde{w}^k)^TQ_k^{PD}(w^k-\tilde{w}^k)  =  r\|x^k-\tilde{x}^k\|^2 + (x^k-\tilde{x}^k)^TA^T(y^k-\tilde{y}^k) + s_k\|y^k-\tilde{y}^k\|^2}     \nonumber \\
 &\geq&   r\|x^k-\tilde{x}^k\|^2 -\frac{1}{2}\Big\{\frac{1}{r}\|A^T(y^k-\tilde{y}^k)\|^2+r\|x^k-\tilde{x}^k\|^2\Big\}+ s_k\|y^k-\tilde{y}^k\|^2   \nonumber \\
 &\overset{\eqref{ccpd}}{\geq}&   r\|x^k-\tilde{x}^k\|^2 -\frac{1}{2}\Big\{\nu s_k\|y^k-\tilde{y}^k\|^2+r\|x^k-\tilde{x}^k\|^2\Big\}+ s_k\|y^k-\tilde{y}^k\|^2  \\
 &  = &    \frac{1}{2}\big\{r\|x^k-\tilde{x}^k\|^2+(2-\nu)s_k\|y^k-\tilde{y}^k\|^2\big\} \nonumber\\
 &\geq&  \frac{1}{2}\big\{r\|x^k-\tilde{x}^k\|^2+s_k\|y^k-\tilde{y}^k\|^2\big\}. \nonumber
\end{eqnarray}
This completes the proof of the lemma.
\end{proof}

Next, we show that $M_k^{PD}(w^k-\tilde{w}^k)$ is an ascent direction of the unknown distance function $\|w-w^\ast\|_{H^{PD}}^2$ at the point $w^k$.  To this end,
by setting $w$ in \eqref{APDA-Q} as any fixed $w^\ast\in\Omega^\ast$ and using the identity \eqref{EQF}, we get
\begin{eqnarray*}
% \nonumber to remove numbering (before each equation)
  \lefteqn{(\tilde{w}^{k}-w^\ast)^T Q_k^{PD}(w^k  -\tilde{w}^{k})\geq \theta(\tilde{w}^{k})-\theta(w^\ast)  + (\tilde{w}^{k}-w^\ast)^T F(\tilde{w}^{k})  } \\
   &=&     \theta(\tilde{w}^{k})-\theta(w^\ast)  + (\tilde{w}^{k}-w^\ast)^T F(w^\ast)\geq0. \qquad \qquad
\end{eqnarray*}
It follows from $\tilde{w}^{k}-w^\ast=(w^k-w^\ast)-(w^k-\tilde{w}^{k})$ that
\begin{eqnarray}\label{K-Dirpd}
% \nonumber to remove numbering (before each equation)
  \lefteqn{ (w^k-w^\ast)^T Q_k^{PD}(w^k  -\tilde{w}^{k})}  \nonumber \\
  &\geq& (w^k-\tilde{w}^k)^TQ_k^{PD}(w^k  -\tilde{w}^{k})\overset{\eqref{Key-inequality1}}{\geq}\frac{1}{2}\Big\{r\|x^k-\tilde{x}^k\|^2+s_k\|y^k-\tilde{y}^k\|^2\Big\}.
\end{eqnarray}
In light of the matrices $Q_k^{PD}$ and $H^{PD}$ defined in \eqref{QPD} are nonsingular,  the inequality \eqref{K-Dirpd} can be further reformulated as
\begin{eqnarray}\label{AD1}
% \nonumber to remove numbering (before each equation)
\lefteqn{\Big\langle \nabla(\frac{1}{2}\|w-w^\ast\|_{H^{PD}}^2)\Big|_{w=w^k}, (H^{PD})^{-1}Q_k^{PD}(w^k  -\tilde{w}^{k})\Big\rangle} \nonumber\\
  &\overset{\eqref{HMQ}}{=}& \Big\langle H^{PD}(w^k-w^\ast), M_k^{PD}(w^k  -\tilde{w}^{k})\Big\rangle  \geq\frac{1}{2}\Big\{r\|x^k-\tilde{x}^k\|^2+s_k\|y^k-\tilde{y}^k\|^2\Big\},
\end{eqnarray}
which indicates that $M_k^{PD}(w^k  -\tilde{w}^{k})$ is an ascent direction of the unknown distance function $\|w-w^\ast\|_{H^{PD}}^2$ at the point $w^k$. Consequently, we can update the new iterate $w^{k+1}$ by the correction step
\begin{equation}\label{cspd}
 w^{k+1} = w^k - \alpha M_k^{PD}(w^k  -\tilde{w}^{k}),
\end{equation}
where $\alpha>0$ is the corresponding step size. 

Similarly, let us turn to determine an appropriate step size $\alpha$ in the correction step \eqref{cspd} to make the new iterate $w^{k+1}$
 closer to $\Omega^\ast$ as much as possible.  Since
\begin{eqnarray}\label{cc-pd}
% \nonumber to remove numbering (before each equation)
  \lefteqn{\|w^{k+1}-w^\ast\|_{H^{PD}}^2} \nonumber \\
  &=& \|w^k - \alpha M_k^{PD}(w^k  -\tilde{w}^{k})-w^\ast\|_{H^{PD}}^2\\
  &=& \|w^k -w^\ast\|_{H^{PD}}^2-2\alpha(w^{k}-w^\ast)^TH^{PD}M_k^{PD}(w^k  -\tilde{w}^{k})+\alpha^2\|M_k^{PD}(w^k  -\tilde{w}^{k})\|_{H^{PD}}^2 \nonumber \\
  &\overset{\eqref{K-Dirpd}}{\leq}& \|w^k -w^\ast\|_{H^{PD}}^2-2\alpha(w^{k}-\tilde{w}^k)^TQ_k^{PD}(w^k  -\tilde{w}^{k})+\alpha^2\|M_k^{PD}(w^k  -\tilde{w}^{k})\|_{H^{PD}}^2, \nonumber
\end{eqnarray}
and we further define
$$q_k^{PD}(\alpha):=2\alpha(w^{k}-\tilde{w}^k)^TQ_k^{PD}(w^k  -\tilde{w}^{k})-\alpha^2\|M_k^{PD}(w^k  -\tilde{w}^{k})\|_{H^{PD}}^2.$$
By maximizing the quadratic term $q_k^{PD}(\alpha)$, we have
\begin{equation}\label{pd-stepsize}
  \alpha_k^\ast =\frac{(w^{k}-\tilde{w}^k)^TQ_k^{PD}(w^k  -\tilde{w}^{k})}{\|M_k^{PD}(w^k  -\tilde{w}^{k})\|_{H^{PD}}^2}.
\end{equation}
Similarly, due to $q_k^{PD}(\alpha)$ is a lower bound  quadratic contraction function, we further introduce a relaxation factor $\gamma\in (0, 2)$ and set $\alpha_k=\gamma\alpha_k^\ast$, which immediately yields the tailored correction step \eqref{APDA-c}.

\begin{lemma}\label{lemmaPD}
Let $\{w^k\}$ and $\{\tilde{w}^k\}$ be the sequences generated by Algorithm \ref{alg1} and the relaxed condition \eqref{ccpd} hold, and let $\underline{s}$ be the lower bound of the sequence $\{s_k\}$. Then, for any $\gamma\in(0,2)$ and $w^\ast\in\Omega^\ast$, there exists a constant $C^{PD}>0$ such as
\begin{equation}\label{con-con-pd}
   \|w^k -w^\ast\|_{H^{PD}}^2-\|w^{k+1}-w^\ast\|_{H^{PD}}^2\geq \gamma(2-\gamma)C^{PD}\|w^k-\tilde{w}^k\|_{\underline{H}^{PD}}^2,
\end{equation}
where
\begin{equation}\label{Hlpd}
\underline{H}^{PD}=\left(
  \begin{array}{cc}
    r I_n  & 0 \\
   0   & \underline{s}I_m \\
  \end{array}
\right).
\end{equation}
\end{lemma}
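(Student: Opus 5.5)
The plan mirrors the proof of Lemma \ref{lemma4}, with the roles of the primal and dual blocks exchanged.

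\textbf{Step 1 (exact one-step identity).} Set $\alpha=\gamma\alpha_k^\ast$ in \eqref{cc-pd}. Using the definition \eqref{pd-stepsize}, we have
$$(\alpha_k^\ast)^2\|M_k^{PD}(w^k-\tilde w^k)\|_{H^{PD}}^2=\alpha_k^\ast (w^k-\tilde w^k)^TQ_k^{PD}(w^k-\tilde w^k).$$
Substituting this into \eqref{cc-pd} gives
$$\|w^k-w^\ast\|_{H^{PD}}^2-\|w^{k+1}-w^\ast\|_{H^{PD}}^2\ge \gamma(2-\gamma)\alpha_k^\ast (w^k-\tilde w^k)^TQ_k^{PD}(w^k-\tilde w^k).$$

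\textbf{Step 2 (lower bound on the quadratic form).} Apply \eqref{Key-inequality1} together with $s_k\ge\underline{s}$. This bounds the quadratic form below by $\frac12\{r\|x^k-\tilde x^k\|^2+\underline{s}\|y^k-\tilde y^k\|^2\}=\frac12\|w^k-\tilde w^k\|_{\underline{H}^{PD}}^2$. The lower bound $s_k\ge\underline{s}$ is guaranteed by Step 4 of Algorithm \ref{alg1}: $s_k$ is only ever decreased to $\max\{\frac23 s_k,\underline{s}\}$, and the inner loop only increases it, since $t\theta>1$ there. So it then suffices to show that $\alpha_k^\ast\ge 2C^{PD}$ uniformly in $k$.

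\textbf{Step 3 (uniform lower bound on $\alpha_k^\ast$).} This is the main obstacle. First compute
$$(M_k^{PD})^TH^{PD}M_k^{PD}=\left(\begin{array}{cc} rI_n & A^T\\ A & \frac1r AA^T+\frac{s_k^2}{s_a}I_m\end{array}\right).$$
This matrix is positive definite, since it is congruent to $H^{PD}$ through the nonsingular $M_k^{PD}$. By the upper bound $s_k\le\overline{s}$ in \eqref{ulpd}, it is dominated by the $k$-independent positive definite matrix $\overline{H}^{PD}$ obtained by replacing $s_k^2$ with $\overline{s}^2$. Hence
$$\alpha_k^\ast\ge \frac{\|w^k-\tilde w^k\|_{\underline{H}^{PD}}^2}{2\|w^k-\tilde w^k\|_{\overline{H}^{PD}}^2}.$$
Both $\underline{H}^{PD}$ and $\overline{H}^{PD}$ are fixed positive definite matrices, so norm equivalence gives
$$\|v\|_{\overline{H}^{PD}}^2\le \lambda_{\max}\big((\underline{H}^{PD})^{-1/2}\overline{H}^{PD}(\underline{H}^{PD})^{-1/2}\big)\,\|v\|_{\underline{H}^{PD}}^2.$$
Therefore $\alpha_k^\ast\ge 2C^{PD}$ with
$$C^{PD}=\frac{1}{4\lambda_{\max}\big((\underline{H}^{PD})^{-1/2}\overline{H}^{PD}(\underline{H}^{PD})^{-1/2}\big)}>0.$$

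\textbf{Conclusion.} Combining Steps 1–3 yields \eqref{con-con-pd}. The only delicate point is making sure both bounds on $s_k$ hold uniformly, from Step 4 of Algorithm \ref{alg1} and from \eqref{ulpd}. Once they do, the constant is independent of $k$.
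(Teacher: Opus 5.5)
Your proposal is correct and follows the paper's proof step for step: the exact one-step identity from the choice of $\alpha_k^\ast$, the lower bound via \eqref{Key-inequality1} together with $s_k\ge\underline{s}$, and the uniform lower bound on $\alpha_k^\ast$ through $(M_k^{PD})^TH^{PD}M_k^{PD}\preceq\overline{H}^{PD}$ and norm equivalence. You go slightly further by writing $C^{PD}$ explicitly. One small caveat about your side justification: Step 4 of Algorithm \ref{alg1} by itself only guarantees $s_k\ge\min\{s_0,\underline{s}\}$, since nothing forces $s_0\ge\underline{s}$. This does not affect your argument, because the lemma assumes $\underline{s}$ is a lower bound of $\{s_k\}$.
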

\begin{proof}
To begin with, by setting $\alpha=\gamma\alpha_k^\ast$ in \eqref{cc-pd}, we have 
\begin{eqnarray*}
% \nonumber to remove numbering (before each equation)
  \lefteqn{\|w^{k+1}-w^\ast\|_{H^{PD}}^2}\\
  &\leq& \|w^k -w^\ast\|_{H^{PD}}^2-2\gamma\alpha_k^\ast(w^{k}-\tilde{w}^k)^TQ_k^{PD}(w^k  -\tilde{w}^{k})+\gamma^2(\alpha_k^\ast)^2\|M_k^{PD}(w^k  -\tilde{w}^{k})\|_{H^{PD}}^2 \\
  &\overset{\eqref{pd-stepsize}}{\leq}& \|w^k -w^\ast\|_{H^{PD}}^2-2\gamma\alpha_k^\ast(w^{k}-\tilde{w}^k)^TQ_k^{PD}(w^k  -\tilde{w}^{k})+\gamma^2\alpha_k^\ast(w^{k}-\tilde{w}^k)^TQ_k^{PD}(w^k  -\tilde{w}^{k}) \\
  &=& \|w^k -w^\ast\|_{H^{PD}}^2 - \gamma(2-\gamma)\alpha_k^\ast(w^{k}-\tilde{w}^k)^TQ_k^{PD}(w^k  -\tilde{w}^{k}).
\end{eqnarray*}
Moreover, it follows from  \eqref{Key-inequality1} that 
\begin{eqnarray*}
% \nonumber to remove numbering (before each equation)
  &&\|w^k -w^\ast\|_{H^{PD}}^2-\|w^{k+1}-w^\ast\|_{H^{PD}}^2  \geq  \gamma(2-\gamma)\alpha_k^\ast(w^{k}-\tilde{w}^k)^TQ_k^{PD}(w^k  -\tilde{w}^{k})  \\
   &&\geq \frac{1}{2}\gamma(2-\gamma)\alpha_k^\ast\big\{r\|x^k-\tilde{x}^k\|^2+s_k\|y^k-\tilde{y}^k\|^2\big\} \\
   &&\geq\frac{1}{2} \gamma(2-\gamma)\alpha_k^\ast \big\{r\|x^k-\tilde{x}^k\|^2+\underline{s}\|y^k-\tilde{y}^k\|^2\big\}  \\
   &&=\frac{1}{2}\gamma(2-\gamma)\alpha_k^\ast \|w^k-\tilde{w}^k\|_{\underline{H}^{PD}}^2.
\end{eqnarray*}
It is sufficient to show that there exists a constant $C^{PD}$ satisfying $\alpha_k^\ast\geq 2C^{PD}$ uniformly. Recall the matrices $H^{PD}$ and $M_k^{PD}$ defined in \eqref{QPD} and \eqref{MPD} respectively, and that the sequence $\{s_k\}$ has an upper bound $\overline{s}$ given by \eqref{ulpd}. We have
$$0\prec (M_k^{PD})^TH^{PD}M_k^{PD}=\left(\!\!
  \begin{array}{cc}
    r I_n & A^T \\
    A   & \frac{1}{r} A A^T + \frac{s_k^2}{s_a}I_m \\
  \end{array}\!\!
\right)\preceq\left(\!\!
  \begin{array}{cc}
    r I_n & A^T \\
    A   & \frac{1}{r} A A^T + \frac{\overline{s}^2}{s_a}I_m \\
  \end{array}\!\!
\right)=:\overline{H}^{PD}.$$
It further implies that
\begin{eqnarray*}
% \nonumber to remove numbering (before each equation)
 \alpha_k^\ast &=&\frac{(w^{k}-\tilde{w}^k)^TQ_k^{PD}(w^k  -\tilde{w}^{k})}{\|M_k^{PD}(w^k  -\tilde{w}^{k})\|_{H^{PD}}^2}
  \geq \frac{\|w^k-\tilde{w}^k\|_{\underline{H}^{PD}}^2}{2\|w^k  -\tilde{w}^{k}\|_{\overline{H}^{PD}}^2}.
\end{eqnarray*}
The assertion of lemma follows immediately by using the norm equivalence principle.
\end{proof}

With the assertion of Lemma \ref{lemmaPD}, we can immediately obtain the global convergence theory of the proposed scheme, which is summerized as the following theorem.
\begin{theorem}
Let $\{w^k \}$ and $\{\tilde{w}^k\}$  be the sequences generated by Algorithm \ref{alg1} and the relaxed condition \eqref{ccpd} hold. Then, the sequence  $\{w^k \}$  converges to some $w^\infty\in\Omega^\ast$.
\end{theorem}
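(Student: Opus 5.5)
The argument will follow the convergence proof for Algorithm \ref{alg2} line by line, with the contraction inequality \eqref{con-con-pd} of Lemma \ref{lemmaPD} as the main tool. First, fix any $w^\ast\in\Omega^\ast$. Since $\gamma\in(0,2)$ and $C^{PD}>0$, inequality \eqref{con-con-pd} shows that $\{\|w^k-w^\ast\|_{H^{PD}}\}$ is nonincreasing. Because $H^{PD}$ is positive definite, the sequence $\{w^k\}$ is bounded. Summing \eqref{con-con-pd} over $k=0,1,\ldots$ gives
$$\sum_{k=0}^{\infty}\|w^k-\tilde{w}^k\|_{\underline{H}^{PD}}^2\le\frac{1}{\gamma(2-\gamma)C^{PD}}\|w^0-w^\ast\|_{H^{PD}}^2 .$$
Since $\underline{H}^{PD}$ is positive definite, with $r>0$ and $\underline{s}>0$, it follows that $\|w^k-\tilde{w}^k\|\to0$.

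Next, take a cluster point $w^\infty$ of $\{w^k\}$ and a subsequence $w^{k_j}\to w^\infty$. Then $\tilde{w}^{k_j}\to w^\infty$ as well, and $w^\infty\in\Omega$ because $\Omega$ is closed. I then pass to the limit in the VI \eqref{APDA-Q} of Lemma \ref{PD-Q} along $k_j$. The right-hand side $(w-\tilde{w}^{k_j})^TQ_{k_j}^{PD}(w^{k_j}-\tilde{w}^{k_j})$ tends to zero for each fixed $w\in\Omega$, because $Q_{k}^{PD}$ is uniformly bounded: $s_k\le\overline{s}$ by \eqref{ulpd}, and $r$ and $A$ are fixed. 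This uniform boundedness of $Q_k^{PD}$ is the step that needs care, since the varying $s_k$ enters it. On the left-hand side, $F$ is linear and hence continuous. For $\theta$, I use its continuity as in the proof for Algorithm \ref{alg2}; more carefully, lower semicontinuity of $\theta$ suffices, since $-\theta(\tilde w^{k_j})$ only needs a limsup bound. This yields
$$\theta(w)-\theta(w^\infty)+(w-w^\infty)^TF(w^\infty)\ge0\quad \forall\,w\in\Omega,$$
so $w^\infty\in\Omega^\ast$.

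Finally, apply \eqref{con-con-pd} with $w^\ast=w^\infty$. The sequence $\|w^k-w^\infty\|_{H^{PD}}$ is then nonincreasing, hence convergent. Its subsequence along $k_j$ tends to $0$, so the whole sequence tends to $0$, and $w^k\to w^\infty$. I do not expect a real obstacle. The only points to check are the uniform bounds $\underline{s}\le s_k\le\overline{s}$, which make $\underline{H}^{PD}$ and $Q_k^{PD}$ behave uniformly, and these are already provided by Algorithm \ref{alg1} and \eqref{ulpd}.
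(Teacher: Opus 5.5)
Your proof is correct and follows the paper's argument step for step: Fej\'er monotonicity and boundedness from \eqref{con-con-pd}, summability giving $\|w^k-\tilde w^k\|\to0$, passage to the limit in \eqref{APDA-Q} along a convergent subsequence, and then \eqref{con-con-pd} applied again with $w^\ast=w^\infty$. You also supply details the paper leaves implicit: the bound $s_k\le\overline{s}$ from \eqref{ulpd} makes $(w-\tilde w^{k_j})^TQ_{k_j}^{PD}(w^{k_j}-\tilde w^{k_j})\to0$, and lower semicontinuity of $\theta$ suffices; note only that the argument needs just some positive lower bound on $s_k$, such as $\min\{s_0,\underline{s}\}$, since $\underline{s}$ itself is a lower bound only when $s_0\ge\underline{s}$.
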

\begin{proof}
To begin with, it follows from \eqref{con-con-pd} that the generated sequence $\{w^k\}$ is bounded. Let $w^{\infty}$ be a cluster point of $\{w^k\}$ and $\{w^{k_j}\}$ be a subsequence converging to $w^{\infty}$.  By summing the inequality  \eqref{con-con-pd} over $k=0,1,\ldots,\infty$, we obtain
$$\sum_{k=0}^{\infty}\|w^k -\tilde{w}^k\|_{\underline{H}^{PD}}^2\leq\frac{1}{\gamma(2-\gamma)C^{PD}}\|w^0 -w^\ast\|_{H^{PD}}^2,$$
which further implies
\begin{equation}\label{Contrac-uut-pd}
  \lim_{k\to \infty}\|w^k -\tilde{w}^k\|_{\underline{H}^{PD}}=0.
\end{equation}
Moreover, it follows from \eqref{Contrac-uut-pd} that the sequence $\{\tilde{w}^{k_j}\}$ also converges to $w^{\infty}$. Then, according to \eqref{APDA-Q}, we have
$$\tilde{w}^{k_j}\in \Omega, \quad \theta(w)-\theta(\tilde{w}^{k_j}) + (w-\tilde{w}^{k_j})^TF(\tilde{w}^{k_j}) \ge (w-\tilde{w}^{k_j})^TQ_k^{PD}(w^{k_j}-\tilde{w}^{k_j}), \quad \forall\; w\in \Omega.$$
Using the continuity of $\theta(w)$ and $F(w)$, we have
$$w^{\infty}\in \Omega, \quad \theta(w)-\theta(w^{\infty}) + (w- w^{\infty})^T F(w^{\infty}) \ge 0, \quad \forall\; w\in \Omega. $$
This means that  $w^{\infty}$ is a solution point of the VI \eqref{VI}, which is also a saddle point of the studied model \eqref{Min-Max}.  Furthermore, it follows from \eqref{con-con-pd} that
$$\|w^{k+1} - w^{\infty}\|_{H^{PD}} \le \|w^k  - w^{\infty}\|_{H^{PD}},$$
which indicates that the sequence $\left\{\left\|w^{k}-w^{\infty}\right\|_{H^{PD}}\right\}_{k \geq 0}$ is nonincreasing and  it is thus convergent. Moreover, it follows from  $\lim_{j\rightarrow\infty}\|w^{k_j}-w^{\infty}\|_{H^{PD}}=0$  that the sequence $\{w^k\}$ also converges to $w^\infty$. This completes the proof of the theorem.
\end{proof}

\section{Numerical experiments}\label{sec5}
\setcounter{equation}{0}
\setcounter{remark}{0}
In this section, we report the numerical performance of the proposed adaptive algorithm on the challenging assignment problem. The preliminary numerical results affirmatively shows that the proposed algorithm can achieve a conspicuous acceleration. Our codes were written in Python 3.11 and they were implemented in a laptop with  2.40 GHz Intel Core i7-13700H CPU and 16 GB memory.
\subsection{Assignment problem and its saddle point reformulation}\label{sec-assignment}

The assignment problem is a fundamental model in operational research, and it aims at assigning $n$ jobs to $n$ operators to get the maximum profit. As discussed in e.g., \cite{LYe}, the assignment problem can be stated as
\begin{eqnarray}\label{assignment}
% \nonumber to remove numbering (before each equation)
   && \max  z= \sum_{i=1}^n  \sum_{j=1}^n   c_{ij} x_{ij} \nonumber  \\[0.1cm]
   && \hbox{\textbf{s.t.}}\left\{
                            \begin{array}{ll}
                             \sum_{j=1}^n   x_{ij}  = 1, \quad i=1, \ldots, n, \\[0.3cm]
                             \sum_{i=1}^n   x_{ij}  = 1,  \quad j=1, \ldots, n,  \\[0.3cm]
                             x_{ij} \in \{ 0, 1\},
                            \end{array}
                          \right.
\end{eqnarray}                            
in which $c_{ij}>0$ represents the benefit  of assigning the $j$-th job to the $i$-th operator. As discussed in \cite{HeMXY}, the model \eqref{assignment} can be equivalently relaxed as the following convex continuous model in which the binary constraints are replaced by box constraints:
\begin{eqnarray}\label{Assignment-R}
% \nonumber to remove numbering (before each equation)
   && \max  z= \sum_{i=1}^n  \sum_{j=1}^n   c_{ij} x_{ij} \nonumber  \\[0.1cm]
   && \hbox{\textbf{s.t.}}\left\{
                            \begin{array}{ll}
                             \sum_{j=1}^n   x_{ij}  = 1, \quad i=1, \ldots, n, \\[0.3cm]
                             \sum_{i=1}^n   x_{ij}  = 1,  \quad j=1, \ldots, n,  \\[0.3cm]
                             0 \leq x_{ij} \leq 1.
                            \end{array}
                          \right.
\end{eqnarray}   
Moreover, let $e_n\in\Re^n$ be the vector whose elements are all 1 and $I_n\in\Re^{n\times n}$ be the identity matrix. As discussed in \cite{HeMXY}, the studied model \eqref{Assignment-R} can be further rewritten compactly as the following convex programming problem with linear equality constraints 
\begin{equation}\label{A-COP}
 \min\big\{-c^Tx \;|\; Ax=e_{2n},\;x\in \mathcal{X}\big\},
\end{equation}
in which 
   $$x = (x_{11}, x_{12}, \ldots, x_{1n}, x_{21}, x_{22}, \ldots, x_{2n}, \cdots, x_{n1}, x_{n2}, \ldots, x_{nn})^T,$$
   $$c = (c_{11}, c_{12}, \ldots, c_{1n}, c_{21}, c_{22}, \ldots, c_{2n}, \cdots, c_{n1}, c_{n2}, \ldots, c_{nn})^T,$$
   $$\mathcal{X}=\big\{x\in\Re^{n^2}\;|\;0\leq x_{ij}\leq1;\; i=1,\ldots,n, \,j=1,\ldots,n\big\},$$
and the constrained coefficient matrix is defined as   
\begin{equation}\label{A-A}
 A=\left( \begin{array}{ccccc}
                   \quad e_n^T  \quad   &            &                             &               &      \\[0.05cm]
                               &   \quad e_n^T  \quad   &                             &                &       \\
                               &            &  \quad  \cdots \; \; \cdots  \quad   &                &       \\
                              &            &                              &   \quad  e_n^T  \quad     &      \\[0.05cm]
                                &          &                            &                    &     \quad e_n^T   \quad       \\[0.2cm]
             \hbox{\large $ I_n$}  &  \hbox{\large $ I_n$}  &    \;  \cdots \;\;  \cdots   & \hbox{\large $ I_n$}   &  \hbox{\large $ I_n$}
              \end{array} \right).
\end{equation}
By imposing the corresponding Lagrange function to \eqref{A-COP}, the equivalent  saddle-point representation of \eqref{A-COP} can be stated as  
\begin{equation}\label{A-SPP}
  \min_{x\in \cal{X}} \max_{y\in \Re^{2n}} L(x,y) := (-c^Tx) -  y^TAx - (-e_{2n}^Ty).
\end{equation}

Let us turn to calculate the spectrum and average spectrum of the matrix $A^T\!A$. It follows from the fundamental results of linear algebra that 
$$ \rho(A^T\!A) =  \rho(AA^T) \quad \hbox{and} \quad  \hbox{Trace}(A^T\!A)    = \hbox{Trace}(AA^T).$$
 Note that
\[\label{AAT}
 AA^T  =
    \left(\begin{array}{cccccccccc}
                 n     & 0        & \ldots   &   \ldots  &   0           &          1   &  1   & \ldots &\ldots   & 1       \\[-0.1cm]
                 0     &  n       & \ddots  &                &  \vdots   &          1   &  1   & \ldots & \ldots   & 1      \\[-0.1cm]
           \vdots &\ddots &\ddots   &  \ddots  & \vdots    &  \vdots  &\vdots  &  &   & \vdots            \\[-0.1cm]
           \vdots &             &\ddots   &  \ddots  &        0      &   1 &  1   & \ldots & \ldots   & 1               \\[-0.1cm]
                0    &  \ldots &\ldots    &       0       &        n      &   1 &  1   & \ldots & \ldots   & 1               \\[-0.1cm]
                  1   &  1   & \ldots &\ldots   & 1    &     n     & 0        & \ldots   &   \ldots  &   0        \\[-0.1cm]
                            1   &  1   & \ldots & \ldots   & 1   & 0     &  n       & \ddots  &                &  \vdots           \\[-0.1cm]
                   \vdots  &\vdots  &  &   & \vdots   &   \vdots &\ddots &\ddots   &  \ddots  & \vdots           \\[-0.1cm]
                     1 &  1   & \ldots & \ldots   & 1   &  \vdots &             &\ddots   &  \ddots  &        0                \\[-0.1cm]
                      1 &  1   & \ldots & \ldots   & 1     &    0    &  \ldots &\ldots    &       0       &        n
                  \end{array} \right)            =
                      \left(\begin{array}{cc}
                        nI_n     & e_ne_n^T  \\
                        e_ne_n^T      &    nI_n  \end{array} \right) .
        \]
According to the same analysis in \cite{HeMXY}, we have 
$$ \rho(A^T\!A) = \rho(AA^T) =2n. $$
For the average eigenvalue of $A^T\!A$, it follows from \eqref{AAT} that
$$\hbox{Trace}(A^T\!A)= \hbox{Trace}(AA^T)  = 2n^2.$$
Then we have
$$\rho_{\hbox{\footnotesize average}}(A^T\!A) =\frac{2n^2}{n^2}= 2 \quad \hbox{and} \quad \rho_{\hbox{\footnotesize average}}(A\!A^T) =\frac{2n^2}{2n} = n.$$
In light of $\rho_{\hbox{\footnotesize average}}(A^T\!A)=2\ll 2n=\rho(A^T\!A)$ for a large $n$, we adopt the adaptive DPHG (i.e., Algorithm \ref{alg2}) in our experiments.

\subsection{Algorithmic settings and implementation}

To simulate the assignment problem, we follow the parameter settings in \cite{HeMXY} and generate the tested data for the model \eqref{Assignment-R} with different values of $n$. Specifically,  we choose $ c_{ij} = \hbox{random} \times  10$ for $ i=1, \ldots, n,  j=1, \ldots, n$; the initials $x_{ij}^0 = \frac{1}{n}$ for $i=1,\ldots, n, \; j=1, \ldots, n$ and $y_l^0= 0$ for $l=1,\ldots,2n$.  In our experiments, we take the Chambolle-Pock's primal dual algorithm  (abbreviated as ``CP-PDA") as the benchmark for comparison. At the same time, we also compare the efficient heuristic PDA with fixed  step size based on average spectrum proposed in \cite{HeMXY}, despite its convergence theory is still missing.

For implementing the tested algorithms more efficiently, we follow the parameter settings in \cite{HeMXY} and the concrete parameters for various algorithms are chosen as follows:
\begin{itemize}
\item The CP-PDA \eqref{C-P}: $r=(10/n) \times \sqrt{n/2}$ and $s = 0.4 n \times  \sqrt{n/2}$;
\item The heuristic PDA: $r=10/n$ and $s= 4/r= 0.4 n$;
\item  Adaptive DPHG: $s=n$, $r_0=3/s$, $r_a=10/s$, $\underline{r}=r_a/\sqrt{n}$, $\nu=0.9$, $\mu=0.5$,  $\theta=1.2$ and  $\gamma=1$.
 \end{itemize}
 Moreover, the stopping criterion for \eqref{A-SPP} is defined as 
$$\frac{\|w^k-w^{k-1}\|}{\|w^k\|} <10^{-10}$$
for all algorithms tested.

\subsection{Numerical results}

In Table \ref{Table1}, we list the computational results of the CP-PDA \eqref{C-P}, the  heuristic PDA and the adaptive DPHG for the assignment problem \eqref{Assignment-R}. The objective function values (denoted by `$\Phi(x^k)$'), required iterations (denoted by `Iter.') and the computational time in seconds (denoted by `CPU(s)') to the stopping criteria are reported. It is clear that the proposed adaptive DPHG has a superior numerical performance compared with the benchmark CP-PDA in both the required iterations and computing time. The acceleration of the adaptive scheme over the Chambolle-Pock's primal dual algorithm can be as much as 10 times or more. Moreover, the proposed adaptive DPHG performs competitively compared with the heuristic PDA, despite the convergence theory of the later one is still missing.
 \begin{figure}[H]
\centering
\subfigure[$n=100$]{
\includegraphics[width=8.3cm]{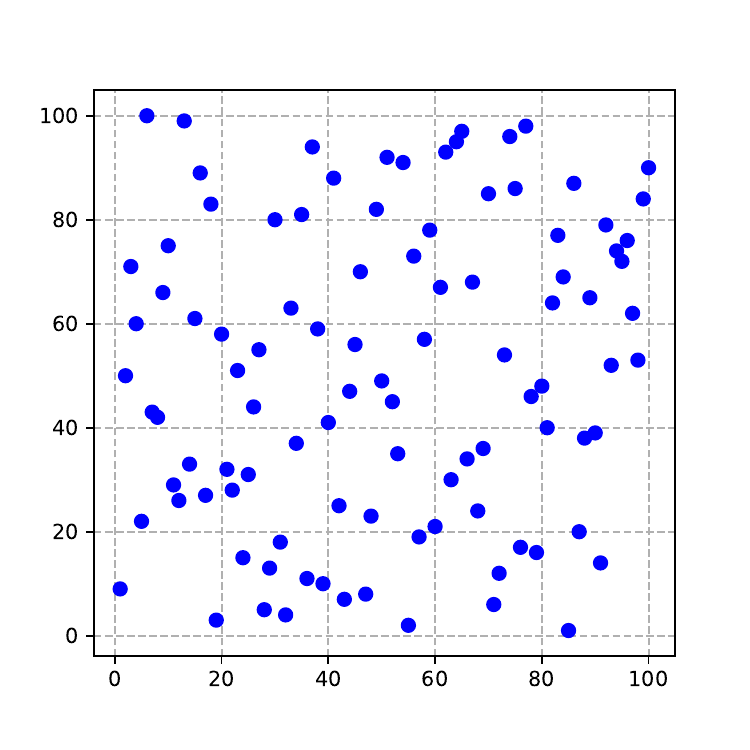}%设置插入的图大小和文件名，包括JPG,PNG,PDF,EPS等，放在源文件目录下
}\hspace{-8mm}
\subfigure[$n=200$]{
\includegraphics[width=8.3cm]{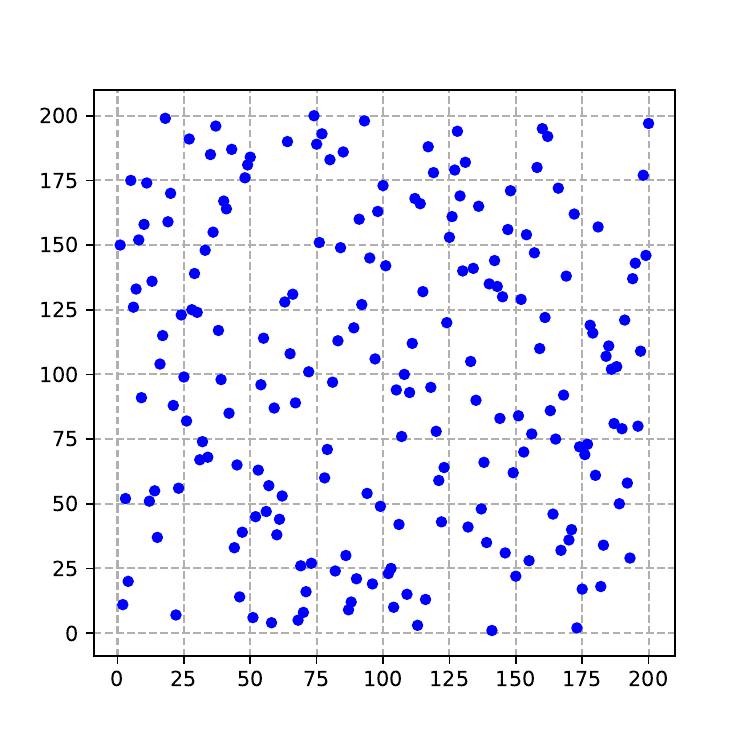}%设置插入的图大小和文件名，包括JPG,PNG,PDF,EPS等，放在源文件目录下
}\\ \vspace{-0.3cm}
\subfigure[$n=500$]{
\includegraphics[width=8.3cm]{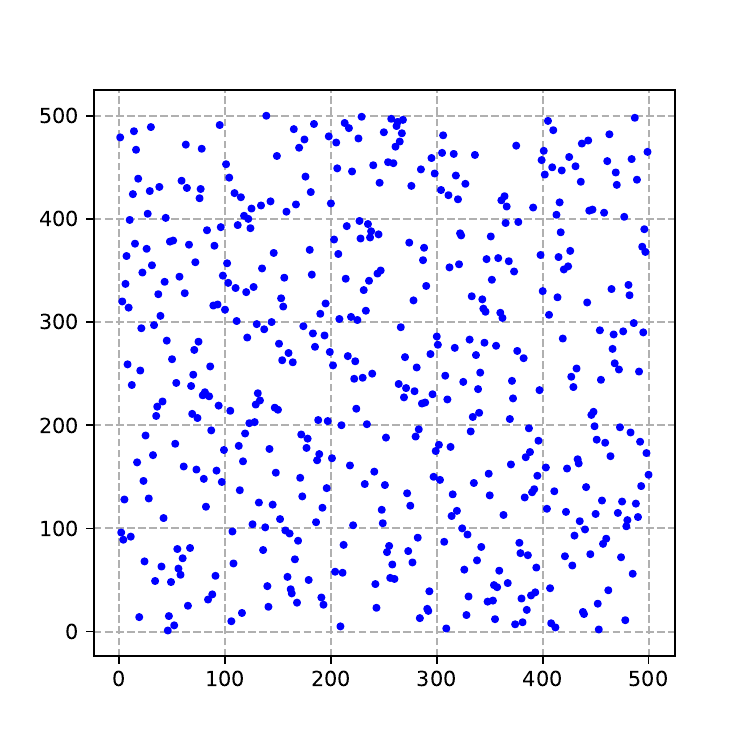}%设置插入的图大小和文件名，包括JPG,PNG,PDF,EPS等，放在源文件目录下
}\hspace{-8mm}
\subfigure[$n=1000$]{
\includegraphics[width=8.3cm]{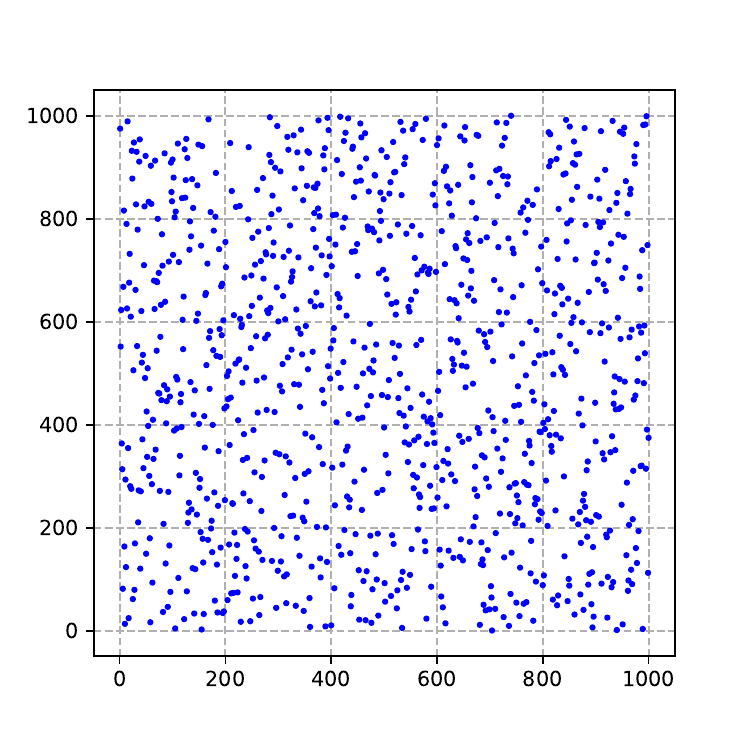}%设置插入的图大小和文件名，包括JPG,PNG,PDF,EPS等，放在源文件目录下
}
\caption{Computational results of the adaptive DPHG for the assignment problem \eqref{Assignment-R} with various sizes $n$. }
\label{fig1}
\end{figure}
 
 At the same time, it can be empirically verified that solutions of the relaxed problem \eqref{Assignment-R}  are all binary. Note that the convergence theory of the heuristic PDA is missing.
 The adaptive DPHG hence  provides a very efficient and reliable  solver to the challenging assignment problem. Moreover,  we visualize the solutions for some tested scenarios for the cases $n=100,\, 200,\,500,\,1000$ in Figure \ref{fig1}.

 \begin{table}[H]
\caption{Numerical results of  the  assignment problem \eqref{Assignment-R} with various $n$. }
\centering
\setlength{\tabcolsep}{2.2mm}{
\begin{tabular}{c ccccccc ccc}
  \toprule
 \multirow{2}{*}{Size $n$} &   \multicolumn{3}{c}{CP-PDA} &   \multicolumn{3}{c}{Heuristic PDA}  & \multicolumn{3}{c}{Adaptive DPHG}  \cr \cmidrule(lr){2-4} \cmidrule(lr){5-7} \cmidrule(lr){8-10}
                                &     Iter.        &   CPU(s)    & $\Phi(x^k)$ &   Iter.         &    CPU(s)         &   $\Phi(x^k)$      &   Iter.         &    CPU(s)         &   $\Phi(x^k)$      \cr
  \midrule
                $100$      &    1053       &    0.08        &   983.74      &   144     &    0.01    &    983.74     &    256      &    0.03       &   983.74      \\
                $200$      &    2139       &    0.66        &   1983.47    &   200     &    0.06    &    1983.47   &    290      &    0.17       &   1983.47    \\
                $300$      &    2407       &    1.67        &   2983.28    &   217     &    0.21    &    2983.28   &    372      &    0.38       &   2983.28    \\
                $400$      &    10330     &    40.43      &   3982.58    &   699     &    2.67    &    3982.58   &    879      &    4.76       &   3982.58    \\
                $500$      &    4739       &    28.62      &   4983.45    &   296     &    1.72    &    4983.45   &    504      &    4.27       &   4983.45    \\
                $600$      &    9321       &    86.74      &   5983.80    &   505     &    4.13    &    5983.80   &    601      &    7.35       &   5983.80    \\
                $700$      &    18774     &    241.23    &   6984.00    &   1041   &    11.60  &    6984.00   &    1120    &    20.05     &   6984.00     \\
                $800$      &    20660     &    361.12    &   7983.59    &   1034   &    15.02  &    7983.59   &    1224    &    28.60     &   7983.59     \\
                $900$      &    9500       &    204.58    &   8983.77    &   443     &    8.18    &    8983.77   &    635      &    18.18     &   8983.77     \\
                $1000$    &    16699     &    429.32    &   9984.04    &   704     &    15.92  &    9984.04   &    941      &    33.50     &   9984.04      \\
  \bottomrule
 \end{tabular}
 \label{Table1}}
\end{table}

%\subsection{Linear programming problem}
%Consider the linear programming problem
%\begin{equation}\label{LP3}
%\min_{x_1,x_2}\{x_1+2x_2  \;|\;  x_1+x_2=1;\;x_1\geq0,\,x_2\geq0\}.
%\end{equation}
%The specific parameter settings for the tested algorithm are chose as follows:
%\begin{itemize}
%\item $\rho_{\hbox{\footnotesize average}}(A^T\!A) = 1 \quad \hbox{and} \quad \rho_{\hbox{\footnotesize average}}(A\!A^T) = 2.$
%\item  Adaptive PDHG-1: $r=  2$ and $s_0=1.5/r$ .
%\item  Adaptive PDHG-2: $s=  1$ and $r_0=1.5/s$ .
% \end{itemize}
%
%
%\begin{figure}[H]
%  \centering
%  \includegraphics[width=9.0cm]{Figure/LP-1}
%  \caption{The orbit of the adaptive PDHG-1 (35 iterations)}
%\end{figure}
%
%\begin{figure}[H]
%  \centering
%  \includegraphics[width=9.0cm]{Figure/LP-2}
%  \caption{The orbit of the adaptive PDHG-2 (5 iterations)}
%\end{figure}

\section{Conclusions}\label{sec6}
In this paper, we present a class of  adaptive primal dual type algorithms based on the associated average spectrum for the generic convex saddle point problem. For the case where $\rho_{\hbox{\footnotesize average}}(A^T\!A)\ll \rho(A^T\!A)$ and the primal subproblem can be implemented easily, we propose an adaptive dual primal hybrid gradient algorithm which fixes the dual regularization parameter and tunes the primal regularization parameter adaptively. Alternatively, for the case where $\rho_{\hbox{\footnotesize average}}(AA^T)\ll \rho(AA^T)$ and the dual subproblem can be tackled easily, we propose an adaptive primal dual hybrid gradient algorithm which fixes the primal regularization parameter and adjusts the dual regularization parameter dynamically.  By exploiting the prediction-correction algorithmic framework, the global convergence theory of the  proposed schemes is determined only by some more  relaxed conditions regarding the average spectrum. In particular, the proposed adaptive dual primal hybrid gradient algorithm provides an efficient and reliable solver to the challenging assignment problem. In the future work, we will explore the potential high-order convergence rate of the novel methods.  

%we will explore the adaptive scheme which can adjust the primal and dual step sizes dynamically and simultaneously. 

\end{CJK*}
\end{document}